\numberwithin{equation}{section}
\theoremstyle{plain}
\newtheorem{Th}{Theorem}[section]
\newtheorem{Lem}[Th]{Lemma}
\newtheorem{Cor}[Th]{Corollary}
\newtheorem{Prop}[Th]{Proposition}
\newtheorem{CorDef}[Th]{Corollary/Definition}
\newtheorem{Def}[Th]{Definition}
\newtheorem*{Th*}{Theorem}
\newtheorem*{Prop*}{Proposition}
\newtheorem*{Def*}{Definition}
\newtheorem*{Not*}{Notation}
\newtheoremstyle{mystyle}{}{}{\itshape}{}{\bfseries}{.}{0.5em}{\thmname{\@ifempty{#3}{#1}\@ifnotempty{#3}{#3}}}
\theoremstyle{definition}
\newtheorem{Rem}[Th]{Remark}
\newtheorem{Ex}[Th]{Example}
\theoremstyle{mystyle}
\providecommand*{\toclevel@Th}{0}
\providecommand*{\toclevel@Prop}{0}
\providecommand*{\toclevel@Cor}{0}
\providecommand*{\toclevel@Def}{0}
\providecommand*{\toclevel@Named}{0}
\providecommand*{\toclevel@Lem}{0}
\providecommand*{\toclevel@Rem}{0}
\providecommand*{\toclevel@Ex}{0}
\providecommand*{\toclevel@CorDef}{0}
\newcommand{\quotes}[1]{``#1''}
\DeclareMathOperator{\Spec}{\mathop{\mathrm{Spec}}}
\DeclareMathOperator{\trdeg}{\mathop{\mathrm{trdeg}}}
\DeclareMathOperator{\Frac}{\mathop{\mathrm{Frac}}}
\DeclareMathOperator{\degree}{\mathop{\mathrm{deg}}}
\DeclareMathOperator{\id}{{\mathop{\mathrm{id}}}}
\DeclareMathOperator{\Eq}{\mathop{\mathrm{Eq}}}
\title{Schemes of Finite Expansion and Universally Closed Curves}
\author[M. Steiner]{Matthias Johann Steiner}
\address{Matthias Steiner - 9020 Klagenfurt am W\"orthersee, \"Osterreich}
\email{steiner.matthias@gmx.at}
\begin{document}
	
	\begin{abstract}
		\noindent In algebraic geometry there is a well-known categorical equivalence between the category of normal proper integral curves over a field $k$ and the category of finitely generated field extensions of $k$ of transcendence degree $1$. In this paper we generalize this equivalence to the category of normal quasi-compact universally closed separated integral $k$-schemes of dimension $1$ and the category of field extensions of $k$ of transcendence degree $1$. Our key technique are morphisms of finite expansion which can be considered as relaxation of morphisms of finite type. Since the schemes in the generalized category have many properties similar to normal proper integral curves, we call them normal integral universally closed curves over $k$.
		
		\smallskip
		\noindent \textbf{Keywords.} algebra, algebraic geometry, curves, universally closed curves
	\end{abstract}
	
	\maketitle
	
	\section*{Introduction}
	The following theorem is a well-known result in the theory of curves.
	\begin{Th*}[{\autocite[Thm.~15.21]{AlgGeom-Goertz}}]\label{Th: classification of curves over fields}
		Let $k$ be a field. There is a contravariant equivalence between the categories of
		\begin{enumerate}[label=(\roman*)]
			\item normal proper integral curves over $k$ (with non-constant morphisms),
			
			\item extension fields $K$ of $k$, finitely generated and of transcendence degree $1$ (with $k$-homomorphisms),
		\end{enumerate}
		given by mapping a curve $C$ as in (i) to its function field $K(C)$.
	\end{Th*}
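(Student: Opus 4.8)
The plan is to verify that the assignment $F$ sending a normal proper integral curve $C/k$ to its function field $K(C)$, and a non-constant morphism $f\colon C\to C'$ to the induced map $f^{\sharp}\colon K(C')\to K(C)$ on function fields, is a well-defined contravariant functor which is essentially surjective, full and faithful. For well-definedness: if $C$ is as in (i) then $C$ is integral, so $K(C)$ is a field; it is of finite type over $k$, so $K(C)$ is finitely generated over $k$; and $\trdeg_{k}K(C)=\dim C=1$. If $f\colon C\to C'$ is a non-constant morphism between such curves, then $\overline{f(C)}$ is an irreducible closed subset of positive dimension, hence all of $C'$ since $\dim C'=1$; thus $f$ is dominant, it maps the generic point of $C$ to that of $C'$, and $f^{\sharp}\colon K(C')\to K(C)$ is a (necessarily injective) $k$-homomorphism. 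Compatibility with composition is then immediate.

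Next I would establish essential surjectivity. Given an extension $K/k$ that is finitely generated of transcendence degree $1$, choose $t\in K$ transcendental over $k$, so that $K$ is a finite extension of $k(t)=K(\mathbb{P}^{1}_{k})$. Let $C$ be the normalization of $\mathbb{P}^{1}_{k}$ in $K$. Since $\mathbb{P}^{1}_{k}$ is covered by spectra of Dedekind domains (or simply because it is excellent), the normalization morphism $\pi\colon C\to\mathbb{P}^{1}_{k}$ is finite; hence $C$ is integral, normal, of dimension $1$, and proper over $k$ as the composite of the finite morphism $\pi$ with the proper structure morphism $\mathbb{P}^{1}_{k}\to\Spec k$. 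By construction $K(C)\cong K$, so $C$ is an object of (i) with $F(C)\cong K$.

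Finally I would prove full faithfulness, which is the crux. Fix normal proper integral curves $C,C'$ over $k$ and a $k$-homomorphism $\phi\colon K(C')\to K(C)$. Because $C$ is normal, integral and one-dimensional, $\mathcal{O}_{C,x}$ is a discrete valuation ring with fraction field $K(C)$ for every closed point $x\in C$. The composite $\Spec K(C)\to\Spec K(C')\to C'$ (the first arrow induced by $\phi$), together with the valuative criterion of properness applied to $C'\to\Spec k$ and the valuation ring $\mathcal{O}_{C,x}$, produces a unique morphism $g_{x}\colon\Spec\mathcal{O}_{C,x}\to C'$ extending it. Since $C'$ is of finite type over $k$ and $\mathcal{O}_{C,x}=\varinjlim_{x\in U}\mathcal{O}_{C}(U)$, each $g_{x}$ spreads out to a morphism $f_{x}\colon U_{x}\to C'$ on a suitable affine open neighbourhood $U_{x}$ of $x$ that induces $\phi$ on function fields. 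On overlaps the morphisms $f_{x}$ agree on the generic point of $C$, hence on all of $U_{x}\cap U_{x'}$ because $C$ is reduced and $C'$ is separated; therefore they glue to a morphism $f\colon C\to C'$ with $f^{\sharp}=\phi$, which gives fullness. For faithfulness, if $f,g\colon C\to C'$ both induce $\phi$ then $\Eq(f,g)$ is a closed subscheme of $C$ (by separatedness of $C'$) containing the generic point, hence all of $C$ since $C$ is reduced, so $f=g$. Every $f$ produced this way is dominant because $\phi$ is injective, hence non-constant, so $F$ is indeed an equivalence of the stated categories.

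I expect the main obstacle to be the gluing step in full faithfulness: turning the compatible family $\{g_{x}\colon\Spec\mathcal{O}_{C,x}\to C'\}$, produced formally from the valuative criterion, into an actual morphism $C\to C'$. This is exactly where one uses the finite-type hypothesis on $C'$ (to spread the local morphisms out over affine opens) and where separatedness of $C'$ together with reducedness of $C$ is needed (to patch them and to obtain uniqueness); it is also the reason the target must be a full proper model rather than an arbitrary open curve, properness being what drives the valuative criterion.
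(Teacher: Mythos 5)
Your proposal is correct and follows essentially the same strategy that the paper uses: the paper itself only cites this classical statement from \autocite[Thm.~15.21]{AlgGeom-Goertz}, but its proof of the generalization (\Cref{Th: Main Theorem}) is built on exactly your outline --- the valuative criterion applied to the valuation rings $\mathcal{O}_{C,x}$, spreading the resulting local morphisms out over affine opens, gluing and proving uniqueness via the equalizer and separatedness, and obtaining essential surjectivity by normalizing a one-dimensional model in $K$. The only cosmetic difference is that you normalize $\mathbb{P}^1_k$ directly (using finiteness of normalization, valid here since $k[t]$ is Nagata), whereas the paper's general construction must instead normalize $\Spec(k[x])$ and then compactify via \Cref{Th: compactification of morphism of finite expansion}, because finiteness of normalization is unavailable for non-finitely-generated extensions.
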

	In this paper we will extend this equivalence of categories to arbitrary field extensions of transcendence degree $1$. To do so we will have to relax the property \textit{of finite type} of schemes in (i). Although this property is utilized in several critical steps of the proof of the theorem it limits us to finitely generated field extensions of $k$. In \autocite{Hamacher-FinExp} Paul Hamacher introduced a suitable relaxation for \textit{of finite type}, he called it \textit{of finite expansion}. In a nutshell a $R$-algebra $A$ is \textit{of finite expansion}, if the structure homomorphism decomposes into a homomorphism of finite presentation followed by an integral homomorphism. Indeed, every finite type $k$-algebra is of finite expansion by Noether normalization, but we will see that the new notion \textit{of finite expansion} also contains many non-finite type algebras.
	
	With morphisms \textit{of finite expansion} we will be able to prove the following theorem.
	\begin{Th*}[see \Cref{Th: Main Theorem}]\label{Main Result}
		Let $k$ be a field. There is a contravariant equivalence between the categories of 
		\begin{enumerate}[label=(\roman*)]
			\item normal quasi-compact universally closed separated integral $k$-schemes of dimension $1$ (with non-constant morphisms),
			
			\item extension fields $K$ of $k$ of transcendence degree $1$ (with \newline$k$-homomorphisms),
		\end{enumerate}
		given by mapping a scheme $X$ as in (i) to its function field $K(X)$.
	\end{Th*}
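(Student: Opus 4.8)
The plan is to mimic the classical proof of the Goertz–Wedhorn theorem, replacing \emph{of finite type} with \emph{of finite expansion} at every point where properness or finite generation was used. We must exhibit two functors and show they are quasi-inverse. In one direction, send a scheme $X$ as in (i) to its function field $K(X)$ (which makes sense since $X$ is integral) and a non-constant morphism $f\colon X\to Y$ to the induced $k$-homomorphism $K(Y)\hookrightarrow K(X)$; that this is an injection of fields uses that $f$ is dominant, which for integral schemes of dimension $1$ is equivalent to non-constancy. In the other direction, to an extension field $K/k$ of transcendence degree $1$ we must associate a scheme $X_K$ in the category (i), and functoriality on morphisms then comes formally from the contravariance of $\Spec$ on the corresponding local rings.

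The construction of $X_K$ is the heart of the matter. First I would pick a transcendence basis, i.e.\ an element $t\in K$ transcendental over $k$, so that $K$ is algebraic over $k(t)$, and consider the integral closure $B$ of $k[t]$ in $K$ together with the integral closure $B'$ of $k[t^{-1}]$ in $K$; glue $\Spec B$ and $\Spec B'$ along the common open $\Spec$ of the integral closure of $k[t,t^{-1}]$ in $K$. This is the natural candidate for the ``normalization of $\mathbb{P}^1_k$ in $K$''. The key points to verify are: (a) $X_K$ is integral with function field $K$ — clear, since each ring is a domain with fraction field $K$; (b) $X_K$ is normal — each ring is integrally closed by construction; (c) $X_K$ has dimension $1$ — because $k[t]\subseteq B$ is an integral extension, $\dim B=\dim k[t]=1$, and similarly for $B'$; (d) $X_K$ is separated — it is covered by two affines whose intersection is affine and over which the ring is generated by the two coordinate rings, exactly as for $\mathbb{P}^1$; (e) $X_K$ is quasi-compact — it has a finite affine cover by construction; and, crucially, (f) $X_K$ is universally closed over $k$. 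For (f) I expect to invoke the valuative criterion: the structure morphism $\Spec B\to \Spec k[t]$, being integral, is universally closed, and integral morphisms compose and glue well, so $X_K\to\mathbb{P}^1_k\to\Spec k$ is a composition of a universally closed morphism with a proper one, hence universally closed. Here is precisely where \emph{of finite expansion} enters: the morphism $X_K\to\Spec k$ is of finite expansion (it factors as the integral morphism $X_K\to\mathbb{P}^1_k$ followed by the finite-type morphism $\mathbb{P}^1_k\to\Spec k$), so the theory developed in the earlier sections applies to it even though $X_K$ need not be of finite type over $k$ when $K/k$ is not finitely generated.

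It then remains to prove that the two functors are mutually quasi-inverse. One composite, $X\mapsto K(X)\mapsto X_{K(X)}$, requires showing that any scheme $X$ as in (i) is isomorphic to the glued normalization construction built from its own function field; the idea is that, locally on an affine open $\Spec A\subseteq X$, normality plus dimension $1$ plus universal closedness forces $A$ to be the integral closure of a suitable polynomial subring in $K(X)$ — one picks $t\in A$ nonconstant, notes $k[t]\hookrightarrow A$ is integral because $A$ is a normal domain, $\dim A=1$, and $A$ is ``closed'' in the valuative sense, so $A$ contains every element of $K(X)$ integral over $k[t]$, giving $A=B$. The other composite, $K\mapsto X_K\mapsto K(X_K)$, is the easy identification $K(X_K)=K$ noted above; independence of the choice of transcendence element $t$ also needs a remark, most cleanly by observing that any two such constructions have the same function field and are both in category (i), then appealing to the part of the equivalence already proved, or directly by a gluing argument. \textbf{The main obstacle} I anticipate is step (f) together with the ``$A$ is already the full integral closure'' claim in the first composite: both hinge on turning the abstract hypothesis \emph{universally closed} into the concrete statement that the relevant rings are integrally closed \emph{in $K$ over the chosen polynomial subring} — essentially a valuative-criterion argument for non-finite-type, finite-expansion morphisms — and making sure the version of the valuative criterion proved earlier in the paper is strong enough to be applied here without a finiteness hypothesis.
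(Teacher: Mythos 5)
Your construction of $X_K$ differs from the paper's and is a legitimate alternative: you build the normalization of $\mathbb{P}^1_k$ in $K$ by hand (gluing $\Spec B$ and $\Spec B'$ for the integral closures of $k[t]$ and $k[t^{-1}]$), and get universal closedness for free because the normalization morphism is integral and $\mathbb{P}^1_k$ is proper. The paper instead normalizes only $\Spec(k[t])$ in $K$ and then invokes the abstract compactification theorem for separated morphisms of finite expansion (\Cref{Th: compactification of morphism of finite expansion}), after which it must spend effort restoring integrality, normality and quasi-compactness of the compactification. Your route buys concreteness and avoids the hardest input from \autocite{Hamacher-FinExp}; what you anticipate as the main obstacle, step (f), is in fact not an obstacle at all for exactly the reason you give.

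The genuine gap is full faithfulness, which you dismiss with ``functoriality on morphisms then comes formally from the contravariance of $\Spec$.'' It does not: given an arbitrary $k$-homomorphism $\alpha\colon K(Y)\to K(X)$ between function fields of two curves in category (i), you must produce a unique morphism of schemes $X\to Y$ inducing it, and this is the technical heart of the paper (\Cref{Th: morphism of fields comes from morphism of schemes}). The argument there needs the full structure theory: every local ring $\mathcal{O}_{X,x}$ at a closed point is a valuation ring because a normal one-dimensional $k$-domain of finite expansion is Pr\"ufer (\Cref{Th: localization at max. ideal is valuation ring}); the valuative criterion for the universally closed morphism $Y\to\Spec(k)$ then yields $\Spec(\mathcal{O}_{X,x})\to Y$; the splitting lemma for finite-expansion algebras (\Cref{Lem: split of ring homomorphism}) spreads this out to an open neighborhood of $x$; and separatedness of $Y$ plus an equalizer argument glues the local extensions and gives uniqueness. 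None of this appears in your plan, and without it neither the morphism-level functoriality nor the identification $X\cong X_{K(X)}$ (which is most cleanly deduced \emph{from} full faithfulness, as the paper does) is available. Relatedly, your sketch of the first composite contains a false step: for an affine open $\Spec(A)\subseteq X$ and an arbitrary nonconstant $t\in A$, the extension $k[t]\hookrightarrow A$ is \emph{not} integral ``because $A$ is a normal domain'' (take $X=\mathbb{P}^1_k$, $A=k[t,t^{-1}]$); integrality of $k[t]\subseteq A$ for a suitably chosen $t$ is exactly the finite-expansion property established via \Cref{Cor: sep univ closed morph => finite exp} and the Noether-normalization argument, and even then not every affine open of $X$ is the \emph{full} integral closure of $k[t]$ in $K(X)$.
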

	During the development of the proof we will see that the schemes in (i) have many properties very similar to normal proper integral curves. Therefore, we will call the schemes in (i) normal integral universally closed curves.
	
	This paper is a condensed version of the author's master's thesis \autocite{Steiner-Thesis}.
	
	\subsubsection*{Acknowledgments}
	I would like to thank my master's thesis advisor, Dr.\ Paul Hamacher, for his support, explanations and patience.

	\section{Morphisms of finite expansion}
	In his preprint \autocite{Hamacher-FinExp} Paul Hamacher developed an \`etale cohomology theory for universally closed morphism of schemes. Previously it was only possible to define the \`etale cohomology group with compact support of a scheme if the scheme is of finite
	type. With his new cohomology theory Hamacher also introduced a new notion of morphisms of schemes, namely morphisms of finite expansion. Morphisms of finite expansion generalize morphisms of finite type, they can be viewed as relaxation of the finiteness condition. Nevertheless, they behave very similar to morphisms of finite type. One of the most important of these similarities is that they are compactifiable. Compactification in our sense means that we can decompose a separated morphism of finite expansion into an open immersion followed by a universally closed separated morphism.

	Since \autocite{Hamacher-FinExp} is only available as preprint, we will restate all necessary definitions and results for a comprehensive treatment.

	\subsection{Algebras of finite expansion}
	We start with the definition of finite expansion for algebras.
	\begin{Def}[{\autocite[Def.~1.1]{Hamacher-FinExp}}]\label{Def: Finite expansion}
		Let $R$ be a ring and $A$ be an $R$-algebra. A family $(a_i)_{i \in I}$ of elements in $A$ is called a quasi-generating system of $A$, if $A$ is integral over $R\left[a_i \mid i \in I\right]$; the $a_i$ are called quasi-generators. If there exists a finite quasi-generating system of $A$, we say that $A$ is of finite expansion over $R$.
	\end{Def}
	
	\begin{Rem}[{\autocite[Rem.~1.2]{Hamacher-FinExp}}]\label{Rem: finite pres + integral}
		Alternatively we could say an $R$-algebra $A$ is of finite expansion if and only if there exists an integral morphism $R[x_1,\dots,x_n] \to A$. In particular, the structure morphism decomposes into a morphism of finite presentation and an integral morphism.
	\end{Rem}
	
	\begin{Lem}[{\autocite[Lemma~1.3]{Hamacher-FinExp}}\protect\footnotemark]\label{Lem: simple properties for algebras of finite expansion}
		We fix a ring $R$ and an $R$-algebra $A$.
		\begin{enumerate}[label=(\arabic*)]
			\item Let $B$ be an $A$-algebra and assume that $A$ is of finite expansion over $R$. Then $B$ is of finite expansion over $R$ if and only if it is of finite expansion over $A$.
			\item Let $f_1,\dots,f_m \in A$ such that $(f_1,\dots,f_m)_A = A$. Then $A$ is of finite expansion over $R$ if and only if $A_{f_i}$ is of finite expansion over $R$ for every $i$.
			\item Let $R'$ be another $R$-algebra and assume that $A$ is of finite expansion over $R$. Then $R' \otimes_R A$ is finite expansion over $R'$.
			\item Let $R'$ be a faithfully flat $R$-algebra and assume that $R' \otimes A$ is of finite expansion over $R'$. Then $A$ is of finite expansion over $R$.
		\end{enumerate}
	\end{Lem}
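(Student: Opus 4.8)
The plan is to reduce everything to the characterization in Remark 1.2: $A$ is of finite expansion over $R$ iff there is a surjective-up-to-integrality map, i.e. an integral homomorphism $R[x_1,\dots,x_n]\to A$, or equivalently a finite quasi-generating system. Throughout I will use the elementary facts that integral extensions are closed under composition, base change, and that a quotient or a localization of an integral extension stays integral.

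For (1), if $A$ is of finite expansion over $R$ with finite quasi-generators $a_1,\dots,a_n$, and $B$ is of finite expansion over $A$ with finite quasi-generators $b_1,\dots,b_m$, then I claim $a_1,\dots,a_n,b_1,\dots,b_m$ quasi-generate $B$ over $R$: indeed $B$ is integral over $A[b_1,\dots,b_m]$, and $A[b_1,\dots,b_m]$ is integral over $R[a_1,\dots,a_n,b_1,\dots,b_m]$ because $A$ is integral over $R[a_1,\dots,a_n]$ (adjoining the same $b_j$ on both sides preserves integrality), so by transitivity $B$ is integral over $R[a_1,\dots,a_n,b_1,\dots,b_m]$. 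Conversely, if $B$ is of finite expansion over $R$ with quasi-generators $c_1,\dots,c_\ell$, then $B$ is integral over $R[c_1,\dots,c_\ell]$, hence a fortiori over $A[c_1,\dots,c_\ell]$, so the $c_i$ quasi-generate $B$ over $A$. For (3), base change: if $A$ is integral over $R[x_1,\dots,x_n]\to A$, then applying $R'\otimes_R(-)$ gives $R'[x_1,\dots,x_n]=R'\otimes_R R[x_1,\dots,x_n]\to R'\otimes_R A$, which is still integral since integrality is stable under arbitrary base change, so $R'\otimes_R A$ is of finite expansion over $R'$.

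Parts (2) and (4) are the descent statements and will be the main obstacle; the forward direction of (2) is easy (a localization of an integral extension is integral, and if $R[x_1,\dots,x_n]\to A$ is integral so is $R[x_1,\dots,x_n]\to A_{f_i}$ after composing with $A\to A_{f_i}$ — wait, one must be slightly careful: $A_{f_i}$ need not be integral over $R[\text{those }x]$, but it is integral over $R[x_1,\dots,x_n][1/f_i]$, which is still of finite presentation over $R$, so $A_{f_i}$ is of finite expansion over $R$). The hard direction is: given that each $A_{f_i}$ is of finite expansion over $R$, produce a finite quasi-generating system for $A$ itself. The idea is to pick finite quasi-generating families for each $A_{f_i}$; clearing denominators, each such quasi-generator can be taken of the form $a/f_i^{N}$ with $a\in A$, so we collect all the numerators together with $f_1,\dots,f_m$ into a finite set $S\subseteq A$, and set $A_0:=R[S]\subseteq A$, a finitely generated $R$-subalgebra. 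One then checks $(f_1,\dots,f_m)A_0=A_0$ (since these $f_i$ generate the unit ideal in $A$ and lie in $A_0$ — here I may need to enlarge $S$ to include elements witnessing $\sum g_if_i=1$) and that each $(A_0)_{f_i}\to A_{f_i}$ is integral by construction. Then integrality is local on the base: $A$ is integral over $A_0$ because it is so after localizing at a covering family $f_i$, and since $A_0$ is of finite type (hence finite expansion) over $R$, part (1) finishes the proof. For (4), faithfully flat descent, the strategy is parallel: one uses that "integral" and "finite type / finite presentation" both descend along faithfully flat base change. Concretely, from $R'\otimes_R A$ of finite expansion over $R'$ I get $R'[x_1,\dots,x_n]\to R'\otimes_R A$ integral; the subtle point is descending a finite quasi-generating system down to $A$, which I expect to handle by first reducing to the Noetherian case or by the standard limit argument writing $R'$ as a filtered colimit, then invoking faithfully flat descent of integrality (an element of $A$ is integral over a subring iff it becomes integral after faithfully flat base change) together with descent of the property of a subalgebra being finitely generated. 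I expect part (4) to require the most care, so I would either cite the corresponding descent results for finite type morphisms from EGA/Stacks and adapt them, or give the limit argument in detail.
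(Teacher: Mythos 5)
The paper itself gives no proof of this lemma: it is quoted verbatim from Hamacher's preprint \autocite[Lemma~1.3]{Hamacher-FinExp} and used as a black box, so there is no in-paper argument to compare against. Judged on its own merits, your treatment of (1), (3), and both directions of (2) is correct and essentially complete. In (1) the concatenated family $(a_i,b_j)$ works exactly as you say, since adjoining the $b_j$ to both sides of the integral extension $R[a_1,\dots,a_n]\subset A$ preserves integrality; in (3) you only need stability of integrality under base change together with $R'\otimes_R R[x_1,\dots,x_n]\cong R'[x_1,\dots,x_n]$; and in the hard direction of (2) your construction of the finitely generated subalgebra $A_0=R[S]$ (with $S$ containing the numerators of the local quasi-generators, the $f_i$, and the coefficients $g_i$ of a partition of unity $\sum g_if_i=1$) correctly reduces the claim to the standard fact that integrality of $A_0\to A$ is Zariski-local on $\Spec(A_0)$.

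The one genuine gap is in (4), which you leave as a declared intention rather than an argument, and the limit/Noetherian-reduction route you gesture at is more machinery than is needed. The missing step is elementary: if $\xi_1,\dots,\xi_n\in R'\otimes_R A$ is a quasi-generating system over $R'$, write each $\xi_j=\sum_l r'_{jl}\otimes a_{jl}$ as a finite sum of pure tensors, let $S=\{a_{jl}\}\subset A$ and $A_0=R[S]\subset A$. Flatness of $R'$ makes $R'\otimes_R A_0\to R'\otimes_R A$ injective with image containing all $\xi_j$, hence containing $R'[\xi_1,\dots,\xi_n]$, so $R'\otimes_R A$ is integral over $R'\otimes_R A_0$. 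Since $A_0\to A_0\otimes_R R'$ is faithfully flat and $A\otimes_R R'\cong A\otimes_{A_0}(A_0\otimes_R R')$, faithfully flat descent of integrality (the one descent input you do correctly identify) gives that $A_0\to A$ is integral, i.e.\ $S$ is a finite quasi-generating system of $A$ over $R$. With that paragraph inserted, your proof of all four parts is complete; without it, part (4) is not proved.
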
\footnotetext{A spelling mistake in the original statement was corrected.}
	We provide two examples that should convince the reader that our new notion \textit{of finite expansion} includes non-finite type algebras.
	\begin{Ex}\label{Ex: examples of algebras of finite expansion}
		Let $k$ be a field.
		\begin{enumerate}[label=(\arabic*)]
			\item\label{Ex 1} The $k$-domain $k[\, x^\frac{1}{n} \mid n \in \mathbb{N} \,]$ is of finite expansion over $k$ and $x$ is the quasi-generator.
			\item\label{Ex 2} Let $x$ be a transcendental element over $k$, and let $k(x) \subset K$ be an algebraic field extension. Then the integral closure of $k[x]$ in $K$ is of finite expansion over $k$.
		\end{enumerate}
	\end{Ex}
	
	\subsection{Schemes of finite expansion}
	We now generalize \Cref{Def: Finite expansion} to schemes and introduce the first fundamental results for morphisms of finite expansion.
	\begin{CorDef}[{\autocite[Cor./Def.~1.4]{Hamacher-FinExp}}]\label{Def: Schemes of finite expansion}
		We call a morphism $f: X \to Y$ of schemes locally of finite expansion if the following equivalent conditions are satisfied.
		\begin{enumerate}
			\item[(a)] For every affine open subscheme $V \subset Y$ and every affine open subscheme $U \subset f^{-1}(V)$, the $\mathcal{O}_Y(V)$-algebra $\mathcal{O}_X(U)$ is of finite expansion.
			\item[(b)] There exists a covering $Y = \bigcup V_i$ by open affine subschemes $V_i \cong \Spec(R_i)$ and a covering $f^{-1}(V_i) = \bigcup U_{i,j}$ by open affine subschemes $U_{i,j} \cong \Spec A_{i,j}$ such that for all $i, j$ the $R_i$-algebra $A_{i,j}$ is of finite expansion.
		\end{enumerate}
		We say that a morphism $f: X \to Y$ is of finite expansion if it is locally of finite expansion and quasi-compact.
	\end{CorDef}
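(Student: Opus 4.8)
The implication from (a) to (b) is immediate — choose any affine open cover $Y = \bigcup V_i$ and, for each $i$, any affine open cover $f^{-1}(V_i) = \bigcup U_{i,j}$, and apply (a) to every pair $(V_i, U_{i,j})$. The content is therefore the converse (b)$\Rightarrow$(a), which I would prove by a standard affine-communication argument. Its inputs are \Cref{Lem: simple properties for algebras of finite expansion}, the remark (see \Cref{Rem: finite pres + integral}) that a finitely presented algebra is of finite expansion, and the familiar topological fact — the same one underlying the proof that ``locally of finite type'' is a local condition — that the intersection of two affine open subschemes of a scheme is covered by open subschemes which are distinguished in each of them.

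First I would extract two ring-theoretic consequences of \Cref{Lem: simple properties for algebras of finite expansion}. (A) If $R \to A$ is of finite expansion and $a \in A$, then $A_a$ is of finite expansion over $R$: indeed $A_a$ is finitely presented, hence of finite expansion, over $A$, and part (1) promotes this to finite expansion over $R$. (B) If $h_1, \dots, h_n \in R$ generate the unit ideal and each $A_{h_i}$ is of finite expansion over $R_{h_i}$, then $A$ is of finite expansion over $R$: each $R_{h_i}$ is finitely presented over $R$, so part (1) upgrades the hypothesis to ``$A_{h_i}$ is of finite expansion over $R$'', and since $A_{h_i}$ is the localization of $A$ at the image of $h_i$ while these images generate the unit ideal of $A$, part (2) gives the conclusion. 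Finally, part (3) records that base change along $R \to R_h$ takes finite-expansion algebras to finite-expansion algebras.

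Now assume (b) and fix an affine open $V = \Spec S \subseteq Y$ together with an affine open $U = \Spec B \subseteq f^{-1}(V)$; I must show that $B$ is of finite expansion over $S$. Using the topological fact above, cover $V$ by opens $W$ that are at once a distinguished open $D_V(s)$ of $V = \Spec S$ (so $\mathcal{O}_Y(W) = S_s$) and a distinguished open $D_{V_i}(r)$ of some $V_i$ containing $W$. By quasi-compactness of $\Spec S$ finitely many such $W_\ell = D_V(s_\ell)$ suffice, so $(s_1, \dots, s_n) = S$. For such a $W = D_{V_i}(r)$ and every $j$, the open set $f^{-1}(W) \cap U_{i,j}$ is the distinguished open $D_{U_{i,j}}(\bar r) = \Spec (A_{i,j})_{\bar r}$, and $(A_{i,j})_{\bar r} = (R_i)_r \otimes_{R_i} A_{i,j}$ is of finite expansion over $(R_i)_r = \mathcal{O}_Y(W) = S_s$ by part (3). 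Since these $\Spec (A_{i,j})_{\bar r}$ cover $f^{-1}(W_\ell)$ and $U \cap f^{-1}(W_\ell) = \Spec B_{s_\ell}$, and since by (B) it is enough to show $B_{s_\ell}$ is of finite expansion over $S_{s_\ell}$ for every $\ell$, the problem reduces to the following: $Y = \Spec R$ is affine, $X$ admits an affine open cover $X = \bigcup_j \Spec A_j$ with each $A_j$ of finite expansion over $R$, and $U = \Spec B \subseteq X$ is affine open — show $B$ is of finite expansion over $R$.

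In that reduced situation, for each $j$ cover $U \cap \Spec A_j$ by opens distinguished in both $\Spec B$ and $\Spec A_j$; each such open has coordinate ring simultaneously of the form $B_b$ and $(A_j)_a$, and $(A_j)_a$ is of finite expansion over $R$ by (A), hence so is $B_b$. Finitely many of these $D_U(b)$ cover the quasi-compact scheme $\Spec B$, producing $b_1, \dots, b_m$ with $(b_1, \dots, b_m) = B$ and each $B_{b_k}$ of finite expansion over $R$; part (2) of \Cref{Lem: simple properties for algebras of finite expansion} then yields that $B$ is of finite expansion over $R$. I expect the one genuinely delicate point to be the bookkeeping in the previous paragraph: the covers supplied by (b) are taken relative to the rings $\mathcal{O}_Y(V_i)$, whereas (a) asks for finite expansion over $\mathcal{O}_Y(V)$, and bridging this gap is exactly what forces the passage to opens distinguished in several affine charts simultaneously, in combination with (A), (B) and part (3); the rest is formal.
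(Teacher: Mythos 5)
Your argument is correct: the two auxiliary facts (A) and (B) do follow from \Cref{Lem: simple properties for algebras of finite expansion} exactly as you say, and the affine-communication reduction via opens distinguished in two charts at once is the standard and right way to get (b)$\Rightarrow$(a). The paper itself states this equivalence without proof, quoting it from the cited preprint of Hamacher, so there is no in-paper argument to compare against; your proof is the expected one.
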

	\begin{Cor}[{\autocite[Cor.~1.5]{Hamacher-FinExp}}]\label{Cor: properties of morphisms of finite expansion}
		\begin{enumerate}[label=(\arabic*)]
			\item[]
			\item The properties \quotes{locally of finite expansion} and \quotes{of finite expansion} of morphisms of schemes are stable under composition, base change and faithfully flat descent, and are local on the target. The property \quotes{of finite expansion} is also local on the source.
			\item Let $f: X \to Y$ and $g: Y \to Z$ be morphisms of schemes. If $g \circ f$ is locally of finite expansion (resp.\ of finite expansion and $g$ is quasi-separated), then $f$ is locally of finite expansion (resp.\ of finite expansion).
		\end{enumerate}
	\end{Cor}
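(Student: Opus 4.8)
This is the standard permanence package for a class of morphisms, and I would prove it by the usual reduction: each scheme-theoretic assertion is checked on compatible affine open coverings and thereby becomes the corresponding statement about $R$-algebras, which is \Cref{Lem: simple properties for algebras of finite expansion}.

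\textbf{Part (1).} \emph{Local on the target} for ``locally of finite expansion'' is exactly the equivalence (a)$\Leftrightarrow$(b) in \Cref{Def: Schemes of finite expansion}, and for ``of finite expansion'' one adds the elementary fact that quasi-compactness of a morphism may be tested over the members of an affine open cover of the target; \emph{local on the source} follows from \Cref{Lem: simple properties for algebras of finite expansion}(2) together with the same remarks on quasi-compactness (imposed on the total space, or read off a finite cover). For \emph{composition} of $f\colon X\to Y$ and $g\colon Y\to Z$ I would take affine opens $W\subseteq Z$ and $U\subseteq(g\circ f)^{-1}(W)$, cover $g^{-1}(W)$ by affine opens $V_\alpha$ and then $U$ by distinguished opens $U_\beta=D(h_\beta)$ with $U_\beta\subseteq f^{-1}(V_{\alpha(\beta)})$; then $\mathcal{O}_X(U_\beta)$ is of finite expansion over $\mathcal{O}_Y(V_{\alpha(\beta)})$, which is of finite expansion over $\mathcal{O}_Z(W)$, so $\mathcal{O}_X(U_\beta)$ is of finite expansion over $\mathcal{O}_Z(W)$ by \Cref{Lem: simple properties for algebras of finite expansion}(1), and passing to a finite subcover of the affine scheme $U$ and invoking \Cref{Lem: simple properties for algebras of finite expansion}(2) yields the claim for $\mathcal{O}_X(U)$; quasi-compactness composes trivially. \emph{Base change} is the analogous computation with $\mathcal{O}_X(U)\otimes_{\mathcal{O}_Y(V)}\mathcal{O}_{Y'}(V')$ and \Cref{Lem: simple properties for algebras of finite expansion}(3), and \emph{faithfully flat descent} is \Cref{Lem: simple properties for algebras of finite expansion}(4) after first using ``local on the target'' to reduce $Y$ to an affine and replacing $Y'$ by a finite disjoint union of affines faithfully flat over $Y$, with quasi-compactness descending along fpqc coverings.

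\textbf{Part (2).} I would use the graph factorization $f=\mathrm{pr}_2\circ\Gamma_f$, where $\Gamma_f\colon X\to X\times_Z Y$ is the graph of $f$ relative to $g$ and $X\times_Z Y$ is formed along $g\circ f$ and $g$. Then $\mathrm{pr}_2\colon X\times_Z Y\to Y$ is the base change of $g\circ f$ along $g$, hence (locally) of finite expansion by Part (1). Moreover $\Gamma_f$ is the base change of the diagonal $\Delta_{Y/Z}\colon Y\to Y\times_Z Y$ along $f\times_Z\id_Y$, and $\Delta_{Y/Z}$ is always an immersion, so it is locally of finite expansion: open immersions are locally of finite presentation (so an integral, even surjective, $R[x_1,\dots,x_n]\to A$ exists as in \Cref{Rem: finite pres + integral}), closed immersions are integral (the $n=0$ case of \Cref{Rem: finite pres + integral}), and a general immersion is a composition of the two, to which Part (1) applies. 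Hence $\Gamma_f$, and then $f$, are locally of finite expansion. For the quasi-compact variant, $\mathrm{pr}_2$ is quasi-compact because $g\circ f$ is, $\Gamma_f$ is quasi-compact because $\Delta_{Y/Z}$ is---which is precisely the quasi-separatedness of $g$---and quasi-compactness composes.

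I do not expect a genuine obstacle: all the substance already sits in \Cref{Lem: simple properties for algebras of finite expansion}. The only points requiring real care are the bookkeeping with refinements of affine coverings in the composition step (making affine opens of $X$, $Y$, $Z$ nest compatibly), the preliminary reduction to the affine case before applying faithfully flat descent, and the standard but slightly delicate handling of quasi-compactness, in particular for the ``local on the source'' clause.
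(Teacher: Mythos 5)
The paper does not prove this corollary: it is imported verbatim from Hamacher's preprint \autocite[Cor.~1.5]{Hamacher-FinExp} with only a citation, so there is no in-paper argument to compare against. On its own merits your proposal is sound and is the standard permanence argument: part (1) correctly reduces, via compatible affine coverings and distinguished-open refinements, to \Cref{Lem: simple properties for algebras of finite expansion} (parts (1)--(4) in the right places), and part (2) via the graph factorization $f=\mathrm{pr}_2\circ\Gamma_f$ is exactly the right move, since the algebra-level cancellation in \Cref{Lem: simple properties for algebras of finite expansion}(1) presupposes that the intermediate algebra is itself of finite expansion, an assumption not made on $g$; your observation that immersions are locally of finite expansion (open immersions locally of finite presentation, closed immersions integral) fills the needed gap, and the quasi-compactness bookkeeping ($\Gamma_f$ quasi-compact because $\Delta_{Y/Z}$ is, i.e.\ because $g$ is quasi-separated) is correct. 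The one point I would press you on is the clause ``the property `of finite expansion' is also local on the source'': taken in the usual sense (check on an arbitrary open cover of the source), this cannot hold as stated, because quasi-compactness of a morphism is not local on the source (an infinite disjoint union of copies of $\Spec(k)$ over $\Spec(k)$ already fails). Your hedge ``imposed on the total space, or read off a finite cover'' is the right instinct, but it is worth saying explicitly that the statement is only reasonable for finite (or quasi-compact) covers, or else that ``locally of finite expansion'' is what is genuinely source-local; this is an imprecision inherited from the quoted statement rather than a flaw introduced by your argument.
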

	If we impose mild conditions, it is also possible to decompose a morphism of finite expansion into an integral morphism and a morphism of finite presentation. 
	\begin{Prop}[{\autocite[Prop.~1.8]{Hamacher-FinExp}}]\label{Prop: properties of morphism of finite expansion}
		Let $f: X \to Y$ be a separated morphism of qcqs schemes.
		\begin{enumerate}[label=(\arabic*)]
			\item If $f$ is universally closed then it can be decomposed as $f = h \circ g$ with $g$ integral and $h$ proper.
			\item If $f$ is of finite expansion then it can be decomposed as $f = h \circ g$ with $g$ integral and $h$ of finite presentation.
		\end{enumerate}
	\end{Prop}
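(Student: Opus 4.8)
The plan is to reduce both statements to one geometric fact: for a separated morphism $f\colon X\to Y$ of qcqs schemes one can realize $X$ as an affine scheme over a proper (when $f$ is universally closed), resp.\ finitely presented (when $f$ is of finite expansion), $Y$-scheme $X'$. Granting this, the decompositions follow readily. In case~(1): $h\colon X'\to Y$ is proper, hence separated, so the affine morphism $g\colon X\to X'$ factors through its graph $X\to X'\times_Y X$, which is a closed immersion since $h$ is separated, followed by the projection $X'\times_Y X\to X'$, which is universally closed as a base change of $f$; thus $g$ is universally closed, and an affine universally closed morphism is integral, so $f=h\circ g$ is the required decomposition. In case~(2): by \Cref{Cor: properties of morphisms of finite expansion}(2), with $h$ quasi-separated, the affine morphism $g$ is again of finite expansion, so once the affine case is known it only remains to resolve $g$ further and absorb the resulting finitely presented part into $h$.

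The affine case of~(2) is treated directly: $X=\Spec_Y(\mathcal A)$ with $\mathcal A=f_*\mathcal O_X$ of finite expansion, and since $Y$ is qcqs, $\mathcal A$ is the filtered union of its finite-type quasi-coherent $\mathcal O_Y$-subalgebras. Choosing a finite affine cover $Y=\bigcup_i\Spec(R_i)$, over each member $\mathcal A$ admits a finite quasi-generating system, and since there are altogether only finitely many such generators they lie in one finite-type subalgebra $\mathcal B$; then $\mathcal A$ is integral over $\mathcal B$ locally, hence globally, so $g\colon X\to\Spec_Y(\mathcal B)$ is integral over an affine finite-type $Y$-scheme, which one may take finitely presented by a routine further approximation. (The affine case of~(1) needs nothing new: an affine universally closed morphism is integral, so $f=\id_Y\circ f$ already works.)

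Establishing the geometric fact is the crux. I would argue by induction on the least number of affine opens $U_\alpha\subseteq X$ covering $X$ such that each $U_\alpha$ maps into an affine open $V_\alpha\subseteq Y$ with $\mathcal O_X(U_\alpha)$ of finite expansion over $\mathcal O_Y(V_\alpha)$ --- finitely many exist, $f$ and $Y$ being quasi-compact. For a single open: in case~(2), the description of \Cref{Rem: finite pres + integral} gives an integral morphism from $U_1$ to an affine space $\mathbb A^n_{V_1}$, and $\mathbb A^n_{V_1}\to V_1\hookrightarrow Y$ is finitely presented, so $U_1$ sits affinely over a finitely presented $Y$-scheme; in case~(1), since $f$ is universally closed the scheme-theoretic image of $X$ in $Y$ is a closed subscheme of $Y$ contained in $V_1$, hence affine and proper over $Y$, and $X$ maps to it by an affine --- therefore, by the cancellation above, universally closed, hence integral --- morphism. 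The inductive step is the genuinely hard part: given two partial realizations over overlapping opens, one must glue them, in the style of L\"utkebohmert's proof of Nagata's compactification theorem, along the scheme-theoretic closure of the overlap inside the fibre product of the two partial compactifications, using flattening and blow-ups along finitely presented ideals, with ``affine'' in place of ``open immersion'' and ``finitely presented'' (respectively ``proper'') in place of ``proper''. The main obstacle is precisely this gluing: the local compactifications are highly non-unique, and producing mutually compatible ones on overlaps is where essentially all the work lies.
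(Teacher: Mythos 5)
The paper never proves this proposition --- it is quoted verbatim from Hamacher's preprint --- so there is no in-paper argument to compare against; I can only judge your attempt on its own terms. The formal reductions you set up are correct: it does suffice to realize $X$ as affine over a finitely presented (resp.\ proper) $Y$-scheme $X'$; your cancellation argument (the graph of $g$ is a closed immersion because $h$ is separated, the projection $X'\times_Y X\to X'$ is universally closed as a base change of $f$, and an affine universally closed morphism is integral) is sound; and your treatment of the affine case of (2) via finite-type quasi-coherent subalgebras of $f_*\mathcal{O}_X$ is essentially right. But the ``geometric fact'' on which everything rests is exactly the content of the proposition, and you do not prove it: your proposed induction on the size of an affine cover, with a Nagata/L\"utkebohmert-style gluing of local compactifications in the inductive step, is not carried out, and you say yourself that this is ``where essentially all the work lies.'' As it stands the proposal is a correct reduction of the statement to an unproved (and, along your route, very hard) claim, not a proof.

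The missing idea is relative Noetherian approximation (Thomason--Trobaugh; Stacks Project, Tag 09MV): a qcqs scheme $X$ over a qcqs scheme $Y$ is a cofiltered limit $X=\lim_i X_i$ of finitely presented $Y$-schemes with \emph{affine} transition morphisms, and the $X_i$ may be taken separated over $Y$ for $i$ large. This makes $X\to X_i$ affine for every $i$ with no gluing whatsoever --- the limit theorem already packages the local data globally, which is precisely the step your induction cannot handle. From there the proof is short. For (2): a finite affine cover of $X$ refining one as in \Cref{Def: Schemes of finite expansion} descends to $X_i$ for $i$ large, the finitely many quasi-generators lift to sections of $\mathcal{O}_{X_i}$ over that cover for $i$ larger still, and then $X\to X_i$ is affine and integral over each member of the cover, hence integral, while $X_i\to Y$ is of finite presentation. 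For (1): $X\to X_i$ is affine and, by cancellation against the separated $X_i\to Y$, universally closed, hence integral; replacing $X_i$ by the scheme-theoretic image $Z$ of $X$ (onto which $X$ surjects because $X\to Z$ is closed with dense image, so that $Z\to Y$ is universally closed, separated and of finite type, hence proper) gives the decomposition. I would encourage you to rewrite the argument around this approximation theorem; without it, or some other completion of your gluing step, the proposal does not establish the proposition.
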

	A key result for our paper is the following corollary.
	\begin{Cor}[{\autocite[Cor.~1.9]{Hamacher-FinExp}}] \label{Cor: sep univ closed morph => finite exp}
		Every separated universally closed morphism between qcqs schemes is of finite expansion.
	\end{Cor}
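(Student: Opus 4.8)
The plan is to deduce the statement from the decomposition in \Cref{Prop: properties of morphism of finite expansion}(1) together with the fact that ``of finite expansion'' is stable under composition (\Cref{Cor: properties of morphisms of finite expansion}(1)). The first thing I would do is record two elementary closure facts. An integral morphism is affine, and for an integral ring homomorphism $R \to A$ the algebra $A$ is integral over $R = R[\,]$, so the empty family is a quasi-generating system in the sense of \Cref{Def: Finite expansion}; being affine, such a morphism is quasi-compact, and hence integral morphisms are of finite expansion. Similarly, a proper morphism is by definition of finite type, and every morphism of finite type is of finite expansion: locally it is of the form $\Spec\bigl(R[x_1,\dots,x_n]/I\bigr) \to \Spec R$, and the surjection $R[x_1,\dots,x_n] \twoheadrightarrow R[x_1,\dots,x_n]/I$ is integral, so the criterion of \Cref{Rem: finite pres + integral} applies, while quasi-compactness is built into the notion of finite type.

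With these in hand the argument is short. Let $f\colon X \to Y$ be separated and universally closed between qcqs schemes. By \Cref{Prop: properties of morphism of finite expansion}(1) we may factor $f = h \circ g$ with $g$ integral and $h$ proper. By the previous paragraph both $g$ and $h$ are of finite expansion, hence so is $f = h \circ g$ by \Cref{Cor: properties of morphisms of finite expansion}(1); note that quasi-compactness of $f$ comes along for free, being part of ``of finite expansion'' for each factor. One may observe in passing that the intermediate scheme is again qcqs, being proper over $Y$, but no such verification is actually needed, since stability under composition in \Cref{Cor: properties of morphisms of finite expansion}(1) carries no extra hypotheses.

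I do not expect any genuine obstacle here: the substantive content has already been isolated in \Cref{Prop: properties of morphism of finite expansion}(1), whose proof we are entitled to assume. The only points that require an explicit line are the two reductions above --- that integral morphisms are trivially of finite expansion with the empty quasi-generating system, and that finite type (hence proper) morphisms are of finite expansion because a surjection is integral, which feeds into \Cref{Rem: finite pres + integral}.
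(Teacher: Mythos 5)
The paper does not actually prove this statement---it is quoted directly from Hamacher's preprint as a cited result---but your derivation is correct and is evidently the intended one: the result is presented as a corollary immediately following \Cref{Prop: properties of morphism of finite expansion}, and factoring $f = h \circ g$ with $g$ integral and $h$ proper, observing that both factors are of finite expansion (the empty quasi-generating system handles $g$, and finite type handles $h$), and invoking stability under composition from \Cref{Cor: properties of morphisms of finite expansion} is exactly the expected argument. I see no gaps; your two preliminary observations are the only points needing explicit justification and you supply them correctly.
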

	As already mentioned in the introduction morphisms of finite expansion are compactifiable in the following sense.
	\begin{Th}[{\autocite[Thm.~1.17]{Hamacher-FinExp}}]\label{Th: compactification of morphism of finite expansion}
		Let $f: X \to Y$ be a separated morphism of finite expansion between qcqs schemes. Then $f$ can be written as composition $f = \bar{f} \circ j$ where $j: X \to \bar{X}$ is an open embedding and $\bar{f}: \bar{X} \to Y$ is separated and universally closed.
	\end{Th}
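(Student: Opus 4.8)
The plan is to reduce the statement to Nagata's compactification theorem by first decomposing $f$ as in \Cref{Prop: properties of morphism of finite expansion}~(2): since $f$ is separated and of finite expansion between qcqs schemes, we obtain a factorisation $f = h \circ g$ with $g \colon X \to Z$ integral and $h \colon Z \to Y$ of finite presentation. As $h$ is of finite presentation, $Z$ is again qcqs, and as $g$ is integral it is affine, so $X = \Spec_Z(g_*\mathcal{O}_X)$. Replacing $Z$ by the scheme-theoretic image of the quasi-compact morphism $g$, I may moreover assume that $g$ is schematically dominant: the new morphism $X \to Z$ is still affine and integral — a local section of $g_*\mathcal{O}_X$ integral over $\mathcal{O}_Z$ is integral over the quotient algebra $\mathcal{O}_{Z} \hookrightarrow g_*\mathcal{O}_X$ — and, integral morphisms being closed, $g$ is now surjective, while $Z \to Y$ stays of finite type and $Z$ stays qcqs. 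With this in hand one checks, via the valuative criterion and using that $g$ is integral and surjective and that $X \to Y$ is separated, that $h \colon Z \to Y$ is separated: two lifts over a valuation ring that agree generically can be compared after pulling $g \times_Y g$ back along them, choosing a point over the generic point that lies on the diagonal of a (nonempty) fibre, specialising it, and dominating by a valuation ring over which separatedness of $X \to Y$ applies.

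Having reduced to the case where $h \colon Z \to Y$ is a separated morphism of finite type between qcqs schemes, I would invoke Nagata's compactification theorem: $h$ factors as $h = \bar h \circ i$ with $i \colon Z \hookrightarrow \bar Z$ an open immersion and $\bar h \colon \bar Z \to Y$ proper; in particular $\bar Z$ is qcqs. It then remains to extend $g$ across $\bar Z$. Write $\mathcal{B} := g_*\mathcal{O}_X$, a quasi-coherent $\mathcal{O}_Z$-algebra that is integral over $\mathcal{O}_Z$. Since $i$ is a quasi-compact open immersion of qcqs schemes, $i_*\mathcal{B}$ is a quasi-coherent $\mathcal{O}_{\bar Z}$-algebra; let $\mathcal{C} \subseteq i_*\mathcal{B}$ be the relative integral closure of $\mathcal{O}_{\bar Z}$ in $i_*\mathcal{B}$, which is again quasi-coherent because forming integral closures commutes with localisation. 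Put $\bar X := \Spec_{\bar Z}(\mathcal{C})$ and $\bar f := \bar h \circ (\bar X \to \bar Z) \colon \bar X \to Y$. Then $\bar X \to \bar Z$ is integral, hence affine, separated and universally closed, and $\bar h$ is proper, so $\bar f$ is separated and universally closed.

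Finally one verifies that $\bar X$ contains $X$ as the required open subscheme. Restricting to the open $Z \subseteq \bar Z$ gives $i^*\mathcal{C}$, namely the relative integral closure of $\mathcal{O}_Z$ in $i^*(i_*\mathcal{B}) = \mathcal{B}$; since $\mathcal{B}$ is already integral over $\mathcal{O}_Z$ this is all of $\mathcal{B}$, so $\bar X \times_{\bar Z} Z = \Spec_Z(\mathcal{B}) = X$. As $i$ is an open immersion, its base change $j \colon X = \bar X \times_{\bar Z} Z \hookrightarrow \bar X$ is an open embedding, and by construction $\bar f \circ j = \bar h \circ i \circ g = h \circ g = f$, which is the desired factorisation.

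The main obstacle, I expect, is the separatedness reduction in the first paragraph — forcing the finite-presentation part $h$ to be separated so that Nagata's theorem applies; if one can already obtain a separated finite-presentation part in \Cref{Prop: properties of morphism of finite expansion}~(2), this step disappears. Everything else is routine bookkeeping with integral morphisms, scheme-theoretic images and relative integral closures, together with the one deep (but available) external input, Nagata's compactification theorem.
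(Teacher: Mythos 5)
The paper offers no proof of this theorem: it is imported verbatim from Hamacher's preprint (his Thm.~1.17), so there is no in-text argument to compare yours against. Judged on its own terms, your reconstruction is sound and follows what is essentially the standard route: factor $f$ as (integral) $\circ$ (finite presentation) via \Cref{Prop: properties of morphism of finite expansion}~(2), compactify the finite-presentation factor by Nagata --- which only needs \emph{separated and of finite type} over a qcqs base, so your passage to the scheme-theoretic image, which may destroy finite presentation but keeps finite type, is harmless --- and then extend the integral part over $\bar Z$ as $\Spec_{\bar Z}$ of the integral closure of $\mathcal{O}_{\bar Z}$ in $i_*g_*\mathcal{O}_X$ (quasi-coherent, since integral closure in a quasi-coherent algebra commutes with localization). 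The verification that $\bar X\times_{\bar Z}Z=X$ is correct: restricting the integral closure to $Z$ gives the integral closure of $\mathcal{O}_Z$ in $g_*\mathcal{O}_X$, which is all of $g_*\mathcal{O}_X$ because $g$ is integral, so $j$ is the base change of the open immersion $i$ and $\bar f\circ j=f$.

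You also correctly isolate the one genuinely delicate point: \Cref{Prop: properties of morphism of finite expansion}~(2) as stated does not assert that the finite-presentation factor is separated, and Nagata requires this. Your sketch for repairing it is workable but is the part that must be written out in full. Concretely: after making $g$ schematically dominant (hence surjective, as integral morphisms are closed), given $a,b\colon \Spec(V)\rightrightarrows Z$ over $Y$ agreeing at the generic point $\Spec(L)$, the generic fibre of $(X\times_Y X)\times_{Z\times_Y Z}\Spec(V)$ is $X_c\times_L X_c$ with $X_c\neq\emptyset$ by surjectivity of $g$; a point $w$ of its diagonal lies in the closed subscheme $\Delta_{X/Y}(X)$, so its closure does too; that closure surjects onto $\Spec(V)$ because integral morphisms are universally closed; and dominating a specialization of $w$ by a valuation ring $V'$ yields a faithfully flat $V\to V'$ over which $(a,b)$ factors through $\Delta_{Z/Y}$, whence $(a,b)^{-1}(\Delta_{Z/Y})$ is a surjective immersion into the reduced scheme $\Spec(V)$ and therefore all of $\Spec(V)$. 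Note that the valuative criterion is applicable because $h$ is quasi-separated (both $Z$ and $Y$ are qcqs). As you say, this entire step evaporates if the intermediate scheme in the decomposition is arranged to be separated over $Y$ from the outset; absent that, your argument fills the gap.
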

	We conclude this section with the definition of universally closed curves over a field. We introduce this definition to simplify notation and to emphasize the relation between normal proper integral curves and normal quasi-compact universally closed separated integral schemes of dimension $1$ in the coming sections.
	\begin{Def}[Universally closed curves]\label{Def: universally close curve}
		Let $k$ be a field. A non-empty connected $k$-scheme $X$ is called a universally closed curve if it satisfies the following conditions.
		\begin{enumerate}[label=(\roman*)]
			\item $X$ is quasi-compact, universally closed, separated and of dimension $1$.
			\item $\trdeg_k \big(\kappa(\eta)\big) = 1$ for every generic point $\eta$ of an irreducible component of $X$.
		\end{enumerate}
	\end{Def}
	By \Cref{Cor: sep univ closed morph => finite exp} every universally closed curve is of finite expansion over $\Spec(k)$ and we will often use this fact without explicitly referring to the corollary. 
	
	\subsubsection{Extending morphisms of finite expansion}
	For schemes locally of finite presentation it is well-known that one can extend a local ring homomorphism to a morphism of schemes (cf.\ \autocite[Prop.~10.52]{AlgGeom-Goertz} and \autocite[\href{https://stacks.math.columbia.edu/tag/0BX6}{Tag 0BX6}]{stacks-project}). We will extend this property to schemes locally of finite expansion.
	\begin{Lem}\label{Lem: split of ring homomorphism}
		Let $R$ be a ring, and let $A$ and $B$ be $R$-domains. Assume that $A$ is of finite expansion, that $B$ is a normal, and let $\mathfrak{p} \subset B$ be a prime ideal. Suppose a homomorphism $\phi: A \to B_\mathfrak{p}$ is given. Then $\phi$ splits as $A \to B_f \to B_\mathfrak{p}$ for some $f \in B \setminus \mathfrak{p}$. 
	\end{Lem}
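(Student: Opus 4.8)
The plan is to split $\phi$ into two stages mirroring the decomposition of a finite-expansion algebra into a finitely generated part followed by an integral part, using the normality of $B$ to control the integral part. Since $A$ is of finite expansion over $R$, fix a finite quasi-generating system $a_1,\dots,a_m$ and set $A_0 := R[a_1,\dots,a_m] \subseteq A$, so that $A$ is integral over $A_0$ and $A_0$ is generated over $R$ by finitely many elements (alternatively one could invoke \Cref{Prop: properties of morphism of finite expansion}(2) to make $A_0$ even of finite presentation over $R$, but finite generation is all that is needed). First I would treat the restriction $\phi_0 := \phi|_{A_0}\colon A_0 \to B_{\mathfrak{p}}$ by the usual \quotes{clearing denominators} trick: write $\phi_0(a_i) = b_i/s_i$ with $b_i \in B$ and $s_i \in B \setminus \mathfrak{p}$, and put $f := s_1 \cdots s_m$; since $\mathfrak{p}$ is prime, $f \in B \setminus \mathfrak{p}$, and in particular $f \neq 0$. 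Rewriting each $\phi_0(a_i)$ with common denominator $f$ shows $\phi_0(a_i) \in B_f$, and since $B$ is a domain we have inclusions $B \subseteq B_f \subseteq B_{\mathfrak{p}} \subseteq \Frac(B)$; as $A_0$ is generated over $R$ by the $a_i$ and the structure map $R \to B_{\mathfrak{p}}$ factors through $B$, the image $\phi_0(A_0)$ lies in the subring $B_f$. Thus $\phi_0$ factors as $A_0 \to B_f \to B_{\mathfrak{p}}$ with the second arrow the canonical localization map.

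Next I would upgrade this to all of $A$, and this is exactly where normality is used. Take any $a \in A$ and a monic relation $a^n + c_{n-1}a^{n-1} + \cdots + c_0 = 0$ with $c_i \in A_0$, which exists because $A$ is integral over $A_0$. Applying the ring homomorphism $\phi$ gives $\phi(a)^n + \phi(c_{n-1})\phi(a)^{n-1} + \cdots + \phi(c_0) = 0$ in $B_{\mathfrak{p}}$, and the coefficients $\phi(c_i) = \phi_0(c_i)$ lie in $B_f$ by the previous step. Hence $\phi(a) \in B_{\mathfrak{p}} \subseteq \Frac(B)$ is integral over $B_f$; but $B_f$, being a localization of the normal domain $B$, is integrally closed in $\Frac(B_f) = \Frac(B)$, so $\phi(a) \in B_f$. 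Since $a \in A$ was arbitrary, $\phi(A) \subseteq B_f$, i.e. $\phi$ splits as $A \to B_f \to B_{\mathfrak{p}}$ with $f \in B \setminus \mathfrak{p}$, as claimed.

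The individual verifications above are routine, so I expect the main point to be conceptual rather than computational. One cannot simply argue that $B_{\mathfrak{p}} = \varinjlim_{g \notin \mathfrak{p}} B_g$ and that $\Hom_R(A,-)$ commutes with this filtered colimit, because $A$ need be neither of finite presentation nor even of finite type over $R$; that is precisely the point at which the classical extension lemma for schemes locally of finite presentation breaks down. Peeling off the finitely generated subalgebra $A_0$ repairs the colimit issue for $A_0$ and pins down a single $f$, and the normality hypothesis on $B$ is then exactly what guarantees that the remaining, possibly infinitely many, integral generators of $A$ are all absorbed into the same $B_f$ rather than forcing an unbounded chain of further localizations. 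Locating $f$ once from $A_0$ and letting integral closedness of $B_f$ do the rest is the heart of the argument.
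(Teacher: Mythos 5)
Your proof is correct and follows essentially the same route as the paper's: clear denominators on a finite quasi-generating system to locate a single $f \in B \setminus \mathfrak{p}$, then use that the localization $B_f$ of the normal domain $B$ is integrally closed in $\Frac(B)$ to absorb the integral extension $A$ of $A_0 = R[a_1,\dots,a_m]$. The only cosmetic difference is that you apply $\phi$ to each monic relation directly, which handles a non-injective $\phi$ without the paper's reduction step of replacing $A$ by $\phi(A)$.
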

	\begin{proof}
		For simplicity we assume that $\phi$ is injective. By assumption we can find elements $a_1,\dots,a_n \in A$ such that $R[a_1,\dots,a_n] \to A$ is an integral ring homomorphism. Via $\phi$ we have that $a_i \in B_\mathfrak{p}$ for all $i$, thus we can write $a_i$ as 	$a_i = \frac{x_i}{y_i}$, where $x_i \in B$ and $y_i \in B \setminus \mathfrak{p}$. We define the element $f \in B$ as $f = \prod_{i = 1}^{n}y_i$. It is easy to see that $\phi |_{R[a_1,\dots,a_n]}$ factors through $B_f$, and by \autocite[Prop.~8.10]{Kemper-CommAlg} $B_f$ is also normal. Denote with $C$ the integral closure of $R[a_1,\dots,a_n]$ in $\Frac(B)$. It is obvious that $A \subset C$ and by normality of $B_f$ we must also have that $C \subset B_f$. Thus we have found the following split for $\phi$:
		\begin{equation*}
			\begin{tikzcd}
				A \arrow[hookrightarrow]{r}\arrow[bend left, hookrightarrow]{urrd}{\phi} & B_f \arrow[hookrightarrow]{r} & B_\mathfrak{p}.
			\end{tikzcd}
		\end{equation*}
		If $\phi$ is not injective, we replace $A$ by $\phi(A)$ in the above arguments. This concludes the proof.
	\end{proof}
	In the next proposition we provide the geometric formulation of this lemma. We omit the proof.
	\begin{Prop}\label{Prop: extension of morphism of finite expansion}
		Let $X$ and $Y$ be integral $S$-schemes, and let $x \in X$, $y \in Y$ be points lying over the same point $s \in S$. Suppose that $X$ is normal and that $Y$ is locally of finite expansion over $S$. Let $\phi_x: \mathcal{O}_{Y,y} \to \mathcal{O}_{X,x}$ be a local $\mathcal{O}_{S,s}$-homomorphism. Then there exists an open neighborhood $U$ of $x$ and a $S$-morphism $f: U \to Y$ with $f(x) = y$ and such that the homomorphism $\mathcal{O}_{Y,y} \to \mathcal{O}_{X,x}$ induced by $f$ is $\phi_x$.
	\end{Prop}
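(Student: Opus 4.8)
The plan is to reduce to the affine case and then apply \Cref{Lem: split of ring homomorphism}, in complete analogy with the proof for schemes locally of finite presentation (cf.\ \autocite[Prop.~10.52]{AlgGeom-Goertz}). First I would pick an affine open neighborhood $W = \Spec R$ of $s$ in $S$. By continuity of the two structure morphisms I may then choose an affine open neighborhood $\Spec B \subseteq X$ of $x$ that maps into $W$, and an affine open neighborhood $\Spec A \subseteq Y$ of $y$ that maps into $W$. Since $Y$ is integral, $A$ is a domain, and since $Y$ is locally of finite expansion over $S$ with $\Spec A$ lying over $\Spec R$, the $R$-algebra $A$ is of finite expansion by condition (a) of \Cref{Def: Schemes of finite expansion}. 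Since $X$ is integral and normal, $B$ is a normal domain: its localizations are the stalks $\mathcal{O}_{X,\xi}$, which are integrally closed domains with common fraction field $K(X)$, and $B = \bigcap_{\mathfrak{r} \in \Spec B} B_{\mathfrak{r}}$ inside $K(X)$ is then integrally closed, being an intersection of subrings integrally closed in $K(X)$. Write $\mathfrak{p} \subseteq B$ and $\mathfrak{q} \subseteq A$ for the primes corresponding to $x$ and $y$, so $\mathcal{O}_{X,x} = B_{\mathfrak{p}}$ and $\mathcal{O}_{Y,y} = A_{\mathfrak{q}}$, and let $\mathfrak{m} \subseteq R$ be the prime of $s$, so $\mathcal{O}_{S,s} = R_{\mathfrak{m}}$.

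Next I would form the composite $\psi \colon A \to A_{\mathfrak{q}} \xrightarrow{\phi_x} B_{\mathfrak{p}}$ and check that it is an $R$-homomorphism. Indeed, since $y$ and $x$ lie over $s$ we have $\mathfrak{q} \cap R = \mathfrak{m} = \mathfrak{p} \cap R$, so both structure maps $R \to A_{\mathfrak{q}}$ and $R \to B_{\mathfrak{p}}$ factor through $R_{\mathfrak{m}} = \mathcal{O}_{S,s}$; as $\phi_x$ is $\mathcal{O}_{S,s}$-linear, the relevant square commutes and $\psi$ is an $R$-homomorphism (which need not be injective, but \Cref{Lem: split of ring homomorphism} handles that case by passing to $\psi(A)$). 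Applying \Cref{Lem: split of ring homomorphism} to the $R$-domains $A$ (of finite expansion) and $B$ (normal), the prime $\mathfrak{p}$, and the homomorphism $\psi \colon A \to B_{\mathfrak{p}}$, I obtain an element $g \in B \setminus \mathfrak{p}$ together with a factorization $A \to B_g \to B_{\mathfrak{p}}$ of $\psi$. Setting $U := D(g) \subseteq \Spec B \subseteq X$, which contains $x$ because $g \notin \mathfrak{p}$, the ring homomorphism $A \to B_g$ corresponds to an $S$-morphism $U \to \Spec A \hookrightarrow Y$; this is the sought morphism $f$.

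It then remains to verify $f(x) = y$ and that $f$ induces $\phi_x$ on stalks. For the first point: since $\phi_x$ is local, the preimage of the maximal ideal of $B_{\mathfrak{p}}$ under $\psi$ is $\mathfrak{q}$, while along $B_g \to B_{\mathfrak{p}}$ that maximal ideal contracts to $\mathfrak{p}B_g$; as $\psi$ factors through $B_g$, the prime $\mathfrak{p}B_g$ — which is the point $x$ viewed in $U$ — contracts to $\mathfrak{q}$ along $A \to B_g$, i.e.\ $f(x) = y$. For the second point: localizing $A \to B_g$ at $\mathfrak{q}$ and at $\mathfrak{p}B_g$ yields a homomorphism $A_{\mathfrak{q}} \to (B_g)_{\mathfrak{p}B_g} = B_{\mathfrak{p}}$ whose precomposition with $A \to A_{\mathfrak{q}}$ equals $\psi = \phi_x \circ (A \to A_{\mathfrak{q}})$; since $\psi$ already carries $A \setminus \mathfrak{q}$ into $B_{\mathfrak{p}}^{\times}$, the universal property of the localization $A \to A_{\mathfrak{q}}$ forces this homomorphism to coincide with $\phi_x$. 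The substantive content here — splitting off a local ring homomorphism out of a finite-expansion algebra — is already contained in \Cref{Lem: split of ring homomorphism}; accordingly I expect the only real difficulty to be bookkeeping, namely the consistent choice of the three affine charts over a common $\Spec R \subseteq S$ and the verification that $\phi_x$ genuinely descends to an $R$-homomorphism on these charts.
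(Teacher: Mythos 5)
Your proof is correct and takes essentially the same route as the paper's: reduce to compatible affine charts $\Spec(R)$, $\Spec(A)$, $\Spec(B)$, form the composite $A \to A_{\mathfrak{q}} \to B_{\mathfrak{p}}$, and apply \Cref{Lem: split of ring homomorphism} to factor it through some $B_g$, which yields the desired morphism on $D(g)$. The paper simply declares the final verifications ($f(x)=y$ and the stalk condition) to be clear, whereas you spell them out.
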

	\begin{proof}
		Replacing $S$, $X$ and $Y$ by suitable affine open subschemes, we may assume that $S = \Spec(R)$, $X = \Spec(B)$ and $Y = \Spec(A)$ are affine. The points $x$ and $y$ correspond to prime ideals $\mathfrak{q} \subset B$ and $\mathfrak{p} \subset A$. Via $\phi_x$ we immediately obtain the following $R$-homomorphism 
		\begin{equation*}
			\begin{tikzcd}
				A \arrow[hookrightarrow]{r} & A_\mathfrak{p} \arrow[r, "\phi_x"] & B_\mathfrak{q}.
			\end{tikzcd}
		\end{equation*}
		By \Cref{Lem: split of ring homomorphism} this homomorphism factors through $\phi: A \to B_t$ for some $t \in B \setminus \mathfrak{q}$. It is clear that the corresponding morphism $D_{\Spec(B)}(t) \to \Spec(A)$ maps $x$ to $y$ and induces the morphism $\phi_x$ on the stalks. 
	\end{proof}

	\subsection{Quasi-generating systems and polynomial rings}
	Let $k$ be a field, and let $A$ be a non-empty $k$-algebra of finite expansion. By assumption we can find elements $a_1,\dots,a_n \in A$ such that $k[a_1,\dots,a_n] \subset A$ is an integral extension of $k$-algebras. Via Noether normalization \autocite[Thm.~8.19]{Kemper-CommAlg} we can find algebraically independent elements $a_1',\dots,a_m' \in k[a_1,\dots,a_n]$ such that $k[a_1',\dots,a_m'] \subset k[a_1,\dots,a_n]$ is also an integral extension. Thus, by the tower property of integral extensions \autocite[Cor.~8.6]{Kemper-CommAlg} $(a_1',\dots,a_m')$ is an algebraically independent quasi-generating system of $A$. Further $k[a_1',\dots,a_m'] \cong k[x_1,\dots,x_m]$, where the latter ring denotes the polynomial ring in $m$ variables over $k$. Let $(b_1,\dots,b_l)$ be another algebraically independent quasi-generating system of $A$. Then by \autocite[Cor.~8.13]{Kemper-CommAlg} we have the following equality:
	\begin{equation*}
		m = \dim(k[a_1',\dots,a_m']) = \dim(A)= \dim(k[b_1,\dots,b_l]) = l.
	\end{equation*}
	Thus all algebraically independent quasi-generating systems are of the same size.
	
	This observation justifies the following assumption for our notation in the coming chapters: If $\mathfrak{a}$ is a quasi-generating system of $A$, then we can always assume that $\mathfrak{a}$ is algebraically independent over $k$. Further, $k[\mathfrak{a}]$ is isomorphic to a polynomial ring over $k$. To exemplify this isomorphism we will denote the elements of $\mathfrak{a}$ by indeterminate variables of a polynomial ring. I.e., we will write $\mathfrak{a}=(x_1,\dots,x_n)$.
	
	Equipped with our new notation we conclude this section by demonstrating that a $k$-algebra of finite expansion is a Jaffard ring.
	\begin{Lem}\label{Lem: domain of finite expansion is Jaffard domain}
		Let $k$ be a field, and let $A$ be a $k$-algebra of finite expansion. Then $A$ is a Jaffard ring, i.e., $\dim(A[x_1,\dots,x_n]) = n + \dim(A)$.
	\end{Lem}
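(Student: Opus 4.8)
The plan is to reduce the claim to the classical fact that a polynomial ring over a field is Jaffard, using the normalization carried out just above. By the discussion preceding the statement we may fix an algebraically independent quasi-generating system $\mathfrak{a} = (x_1,\dots,x_m)$ of $A$ (writing its entries as polynomial indeterminates per our convention), so that $k[\mathfrak{a}]$ is the polynomial ring in $m = \dim(A)$ variables and the structure map $k[\mathfrak{a}] \hookrightarrow A$ is integral. To avoid a clash of names I keep $x_1,\dots,x_n$ for the indeterminates adjoined in the statement of the lemma.

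First I would pass to the polynomial rings: since every element of $A$ is integral over $k[\mathfrak{a}]$, it is a fortiori integral over $k[\mathfrak{a}][x_1,\dots,x_n]$, and the $x_j$ already lie in that ring; hence
\[
k[\mathfrak{a}][x_1,\dots,x_n] \;\hookrightarrow\; A[x_1,\dots,x_n]
\]
is again an integral extension, and the source is a polynomial ring over $k$ in $m+n$ variables.

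Next I would invoke the standard fact that an integral extension of rings induces an equality of Krull dimensions (Lying Over together with Going Up and Incomparability). Combining this with $\dim k[\mathfrak{a}][x_1,\dots,x_n] = m+n$ and $\dim(A) = m$ gives
\[
\dim\big(A[x_1,\dots,x_n]\big) = \dim\big(k[\mathfrak{a}][x_1,\dots,x_n]\big) = m+n = n + \dim(A),
\]
which is precisely the Jaffard property; in particular $\dim(A) < \infty$. I do not expect a genuine obstacle here — the only slightly delicate points are the dimension equality for integral extensions and the identification $\dim(A) = m$, both of which are already available from the results cited in the preceding paragraphs; the degenerate case $A = 0$ is trivial, since then $A[x_1,\dots,x_n] = 0$ as well.
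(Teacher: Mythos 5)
Your proposal is correct and follows essentially the same route as the paper's proof: both pass from the integral extension $k[\mathfrak{a}] \subset A$ to the integral extension of polynomial rings, then combine the dimension formula for polynomial rings over a field with the fact that integral extensions preserve Krull dimension. The only differences are cosmetic (the paper uses fresh variables $y_1,\dots,y_n$ to avoid the name clash you noted, and states the integrality of $R[x]\subset S[x]$ as a preliminary remark rather than arguing it inline).
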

	\begin{proof}
		First note that if $R \subset S$ is an integral extension of rings, then $R[x] \subset S[x]$ is also an integral extension of rings.
		
		Let $m = \dim(A)$. By assumption we can find algebraically independent elements $x_1,\dots,x_m \in A$ such that $k[x_1,\dots,x_m] \subset A$ is an integral extension. Now let $n \geq 1$ and consider the polynomial ring $A[y_1,\dots,y_n]$. By our remark at the beginning
		\begin{equation*}
			k[x_1,\dots,x_m]\big[y_1,\dots,y_n\big] \subset A[y_1,\dots,y_n]
		\end{equation*}
		is also an integral extension. Further we have that 
		\begin{equation*}
			k[x_1,\dots,x_m]\big[y_1,\dots,y_n\big] \cong k[y_1\dots,y_n,y_{n+1},\dots,y_{m+n}],
		\end{equation*}
		and combined with \autocite[Cor.~5.7,Cor.~8.13]{Kemper-CommAlg} we conclude that
		\begin{equation*}
			\dim(A[y_1,\dots,y_m]) = \dim(k[y_1\dots,y_n,y_{n+1},\dots,y_{m+n}]) = m + n. \qedhere
		\end{equation*}
	\end{proof}

	\section{Schemes of finite expansion over fields}
	In the first part of this section we introduce dimension formulae for schemes of finite expansion over a field $k$. In the second part we demonstrate that a normal $k$-domain of finite expansion is a Pr\"ufer domain. We will then use this property to show that for a normal integral universally closed curve over $k$ we have a bijection between the closed points of the curve and valuation rings inside the function field.
	 
	\subsection{Dimension of schemes of finite expansion}
	For an affine scheme of finite expansion it is straight forward to compute its dimension.
	\begin{Prop}\label{Prop: Dim. of aff. scheme over a field}
		Let $k$ be a field, and let $X = \Spec(A)$ be an affine $k$-scheme of finite expansion. Then we have that $\dim(X) = d$, where $d$ is the number of algebraically independent quasi-generators of $A$.
	\end{Prop}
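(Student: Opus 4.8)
The plan is to reduce the computation to the dimension of a polynomial ring by exploiting integrality. By the discussion preceding \Cref{Lem: domain of finite expansion is Jaffard domain}, the (non-zero) algebra $A$ possesses a finite quasi-generating system, and after Noether normalization we may take it to be algebraically independent; write it as $\mathfrak{a} = (x_1,\dots,x_d)$, so that $k[x_1,\dots,x_d] \hookrightarrow A$ is an integral extension and $k[x_1,\dots,x_d]$ is a polynomial ring in $d$ variables over $k$. That same discussion also shows that $d$ is independent of the chosen algebraically independent quasi-generating system, so the number appearing on the right-hand side of the asserted equality is well-defined.

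First I would recall that $\dim(X) = \dim(A)$ by definition of the Krull dimension of an affine scheme. Next I would apply the standard fact that an integral ring extension $R \subset S$ satisfies lying-over and going-up and hence $\dim(R) = \dim(S)$ (\autocite[Cor.~8.13]{Kemper-CommAlg}, already invoked above); taking $R = k[x_1,\dots,x_d]$ and $S = A$ gives $\dim(A) = \dim(k[x_1,\dots,x_d])$. Finally the classical computation $\dim(k[x_1,\dots,x_d]) = d$ (\autocite[Cor.~5.7]{Kemper-CommAlg}) yields $\dim(X) = \dim(A) = d$, which is the assertion.

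I do not expect a genuine obstacle here: the proposition is essentially a corollary of the preliminary discussion on quasi-generating systems together with two textbook facts about integral extensions and dimensions of polynomial rings. The only point that warrants care is the justification that a \emph{finite} and \emph{algebraically independent} quasi-generating system exists in the first place, which is exactly what Noether normalization (applied to $k[a_1,\dots,a_n] \subset A$ and combined with the tower property of integral extensions) provides.
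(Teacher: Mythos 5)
Your proof is correct and takes essentially the same route as the paper: both reduce to the integral extension $k[x_1,\dots,x_d] \subset A$ obtained from an algebraically independent quasi-generating system and then invoke the same two facts (\autocite[Cor.~5.7, Cor.~8.13]{Kemper-CommAlg}). The only difference is that you spell out the well-definedness of $d$ and the Noether normalization step, which the paper delegates to the preceding discussion.
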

	\begin{proof}
		By assumption we can find algebraically independent elements $x_1,\dots,x_d \allowbreak\in A$ such that $k[x_1,\dots,x_d] \subset A$ is an integral extension of rings. The claim follows then from \autocite[Cor.~5.7,Cor.~8.13]{Kemper-CommAlg}.
	\end{proof}
	Before investigating the general case, we observe that the property (locally) of finite expansion can be passed to the reduced subscheme.
	\begin{Lem}\label{Lem: X_red also of finite expansion}
		Let $k$ be a field.
	\begin{enumerate}[label=(\arabic*)]
		\item Let $A$ be a $k$-algebra of finite expansion. Then $A / \mathcal{N}$ is also of finite expansion over $k$, where $\mathcal{N}$ denotes the nilradical of $A$.
		\item Let $X$ be a $k$-scheme (locally) of finite expansion. Then $X_{red}$ is also (locally) of finite expansion over $k$.
	\end{enumerate}
	\end{Lem}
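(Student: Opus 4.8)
The plan is to reduce the scheme statement~(2) to the algebra statement~(1) via an affine open cover, so essentially all of the content lies in~(1), which I would prove simply by pushing a quasi-generating system through the quotient map.

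For~(1): since $A$ is of finite expansion over $k$, fix $a_1,\dots,a_n \in A$ with $k[a_1,\dots,a_n] \subseteq A$ integral, and let $\pi\colon A \twoheadrightarrow A/\mathcal{N}$ be the canonical surjection. I would use that integrality is preserved by surjective ring homomorphisms: a monic relation for $s$ over a subring $R$ maps under $\pi$ to a monic relation for $\pi(s)$ over $\pi(R)$. Applied to $A$ integral over $k[a_1,\dots,a_n]$, this gives that $A/\mathcal{N}$ is integral over $\pi\big(k[a_1,\dots,a_n]\big) = k[\pi(a_1),\dots,\pi(a_n)]$, so $\pi(a_1),\dots,\pi(a_n)$ is a finite quasi-generating system of $A/\mathcal{N}$ over $k$. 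Equivalently, one can note that the surjection $A \to A/\mathcal{N}$ is integral, hence $A/\mathcal{N}$ is of finite expansion over $A$, and then invoke \Cref{Lem: simple properties for algebras of finite expansion}(1) to descend finite expansion along $k \to A$.

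For~(2): I would choose an affine open cover $X = \bigcup_i \Spec A_i$ with each $A_i$ of finite expansion over $k$, which exists because $X$ is locally of finite expansion over $\Spec(k)$, and observe that $X_{red}$ restricts on $\Spec A_i$ to $(\Spec A_i)_{red} = \Spec\big(A_i/\mathcal{N}(A_i)\big)$, these opens covering $X_{red}$ since $X$ and $X_{red}$ have the same underlying topological space. By part~(1) each $A_i/\mathcal{N}(A_i)$ is of finite expansion over $k$, so $X_{red}$ is locally of finite expansion over $\Spec(k)$. If $X$ is moreover of finite expansion, i.e.\ also quasi-compact, then $X_{red}$ is quasi-compact as well, which settles the parenthetical case.

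I do not expect any real obstacle here. The only two points needing (routine) justification are that integrality descends along a surjection, which is immediate, and that the reduced-scheme structure is compatible with restriction to affine opens, i.e.\ $(X_{red})\big|_{\Spec A} = \Spec\big(A/\mathcal{N}(A)\big)$, which is a standard property of the reduction functor.
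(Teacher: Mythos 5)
Your proposal is correct and follows essentially the same route as the paper: for (1) you push the quasi-generators through the projection $A \twoheadrightarrow A/\mathcal{N}$ and observe that the integral extension descends (the paper does exactly this, while your alternative via \Cref{Lem: simple properties for algebras of finite expansion}(1) is a clean equivalent), and for (2) you reduce to (1) on an affine cover exactly as the paper does by appealing to \Cref{Def: Schemes of finite expansion}. No gaps.
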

	\begin{proof}
		For (1), by assumption we have an integral ring extension $f:k[x_1,\dots,x_n] \allowbreak\hookrightarrow A$ for some $n \in \mathbb{N}$. Via the projection $A \twoheadrightarrow A / \mathcal{N}$ we can extend $f$ to an integral homomorphism $\bar{f}: k[\bar{x}_1,\dots,\bar{x}_n] \hookrightarrow A / \mathcal{N}$, where $\bar{x}_i$ denotes the projection of $x_i$ into $A / \mathcal{N}$.
		
		For (2), the reduced subscheme is defined as $X_{red} =(X, \mathcal{O}_X / \mathcal{N})$, where $\mathcal{N}$ is the nilradical of $\mathcal{O}_X$. The claim now follows from \Cref{Def: Schemes of finite expansion} and part (1).
	\end{proof}
	The following theorem connects the dimension of an integral $k$-scheme locally of finite expansion with the transcendence degree of its function field. It is an adaptation of the locally of finite type case presented in \autocite[Thm.~5.22]{AlgGeom-Goertz}.
	\begin{Th}\label{Thm: trdeg = dim}
		Let $k$ be a field. Let $X$ be an irreducible $k$-scheme locally of finite expansion with generic point $\eta$.
		\begin{enumerate}[label=(\arabic*)]
			\item $\dim(X) = \trdeg_k \big(\kappa(\eta)\big)$.
			\item Let $f: Y \to X$ be a morphism of $k$-schemes of finite expansion such that $f(Y)$ contains the generic point $\eta$ of $X$. Then $\dim(Y) \geq \dim(X)$. In particular we have $\dim(U) = \dim(X)$ for any non-empty open subscheme $U$ of $X$.
		\end{enumerate}
	\end{Th}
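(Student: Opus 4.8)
The plan is to push everything down to the affine integral case, where \Cref{Prop: Dim. of aff. scheme over a field} together with Noether normalization already does the work, and then to deduce (2) by lifting the generic point of $X$ along $f$.

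\textbf{Part (1).} First I would replace $X$ by $X_{red}$. It has the same underlying space as $X$, hence the same dimension; the same residue field at $\eta$, because $\mathcal{O}_{X,\eta}$ is a local ring whose maximal ideal is its only prime (so equals its nilradical), whence $\mathcal{O}_{X_{red},\eta} = \mathcal{O}_{X,\eta}/\mathcal{N}_\eta = \kappa(\eta)$; and it is again locally of finite expansion over $k$ by \Cref{Lem: X_red also of finite expansion}. So we may assume $X$ is integral, and then $\kappa(\eta) = \Frac(A)$ for every non-empty affine open $\Spec(A) \subseteq X$, where by the equivalence in \Cref{Def: Schemes of finite expansion} each such $A$ is a $k$-domain of finite expansion. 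For such an $A$ pick an algebraically independent quasi-generating system $x_1, \dots, x_d$; then $k[x_1,\dots,x_d] \hookrightarrow A$ is integral, so $\Frac(A)$ is algebraic over $k(x_1,\dots,x_d)$ and $\trdeg_k \Frac(A) = d$, while $\dim(\Spec A) = d$ by \Cref{Prop: Dim. of aff. scheme over a field}. Thus $\dim(\Spec A) = \trdeg_k \kappa(\eta)$ for every affine open of $X$.

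To globalize, choose an affine open cover $X = \bigcup_i U_i$. Then $\dim X = \sup_i \dim U_i$: the inequality $\geq$ is clear, and for $\leq$ one restricts a chain of irreducible closed subsets $Z_0 \subsetneq \dots \subsetneq Z_n = X$ to any $U_i$ meeting $Z_0$, the intersections $Z_j \cap U_i$ staying strictly increasing because the closure of the dense subset $Z_j \cap U_i$ in $X$ recovers the irreducible set $Z_j$. Since all the $\dim U_i$ agree and equal $\trdeg_k \kappa(\eta)$ by the previous step, this proves (1).

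\textbf{Part (2).} Choose $y \in Y$ with $f(y) = \eta$. The induced local homomorphism $\mathcal{O}_{X,\eta} \to \mathcal{O}_{Y,y}$ gives an embedding of residue fields $\kappa(\eta) \hookrightarrow \kappa(y)$, so $\trdeg_k \kappa(y) \geq \trdeg_k \kappa(\eta) = \dim X$ by (1). Set $Z := \overline{\{y\}}_{red}$, an integral $k$-scheme with generic point $y$ and function field $\kappa(y)$; since the closed immersion $Z \hookrightarrow Y$ is integral, hence locally of finite expansion, and $Y$ is locally of finite expansion over $k$ (by hypothesis, invoking \Cref{Cor: properties of morphisms of finite expansion} if one reads the hypothesis as "$f$ of finite expansion"), the scheme $Z$ is locally of finite expansion over $k$ by \Cref{Cor: properties of morphisms of finite expansion}. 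Applying (1) to $Z$ yields $\dim Z = \trdeg_k \kappa(y) \geq \dim X$, and $\dim Y \geq \dim Z$ because $Z$ is a closed subset of $Y$; hence $\dim Y \geq \dim X$. For the closing assertion it is cleanest to bypass the morphism statement and apply (1) directly to a non-empty open $U \subseteq X$: it is irreducible, locally of finite expansion, with generic point $\eta$ and function field $\kappa(\eta)$, so $\dim U = \trdeg_k \kappa(\eta) = \dim X$.

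The one place a genuine argument rather than a citation is needed is the identity $\dim X = \sup_i \dim U_i$ over an affine cover; everything else is bookkeeping, namely checking that passing to $X_{red}$, to an affine open, or to $\overline{\{y\}}$ changes neither the relevant function field nor the property "locally of finite expansion", which is exactly what \Cref{Lem: X_red also of finite expansion} and \Cref{Cor: properties of morphisms of finite expansion} supply. I expect the non-reduced and non-quasi-compact cases to be the only subtleties, and both are dissolved by these reductions since \Cref{Prop: Dim. of aff. scheme over a field} and part (1) have no quasi-compactness requirement.
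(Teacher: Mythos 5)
Your proof is correct and follows essentially the same route as the paper: reduce (1) to the reduced affine case and combine \Cref{Prop: Dim. of aff. scheme over a field} with the tower property of transcendence degree, then deduce (2) by lifting $\eta$ to a point $y \in Y$ and applying (1) to $\overline{\{y\}}$. You merely spell out two steps the paper leaves implicit, namely the globalization $\dim X = \sup_i \dim U_i$ and the verification that $\overline{\{y\}}$ is again locally of finite expansion over $k$.
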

	\begin{proof}
		(1) We may assume that $X$ is reduced, and covering $X$ by non-empty open affine subschemes $U$ we may assume that $X = \Spec(A)$, where $A$ is a $k$-domain of finite expansion. Thus we have that $\kappa(\eta) = \Frac(A)$. Let $(x_1,\dots,x_d)$ be an algebraically independent quasi-generating system of $A$, then $k[x_1,\dots,x_d] \subset A$ is an integral extension of integral domains, hence $\kappa(\eta)$ is an algebraic field extension of $K = k(x_1,\dots,x_d)$. With the tower property of the transcendence degree (cf.\ \autocite[Satz~23.4]{Algebra-Karpf}) we conclude that:
		\begin{equation*}
			\trdeg_k\big(\kappa(\eta)\big) = \trdeg_k(K) + \trdeg_K\big(\kappa(\eta)\big) = \trdeg_k(K) = d.
		\end{equation*}
		By \Cref{Prop: Dim. of aff. scheme over a field} we also have that $\dim(A) = d$, so the claim $\dim(A) = \trdeg_k \big(\kappa(\eta)\big)$ follows.
		
		(2) By hypothesis there exists $\theta \in Y$ such that $f(\theta) = \eta$. Therefore $f$ induces a $k$-embedding $\kappa(\eta) \hookrightarrow \kappa(\theta)$. Denote with $Z$ the closure of $\theta$. Then
		\begin{equation*}
			\dim(X) = \trdeg_k\big( \kappa(\eta) \big) \leq \trdeg_k\big( \kappa(\theta) \big) = \dim(Z) \leq \dim(Y). \qedhere
		\end{equation*}
	\end{proof}
	With this theorem we can immediately conclude that for an integral universally closed curve $X$ over $k$ we have that $\trdeg_k\big( K(X) \big) = 1$.
	
	Similar one can extend dimension formulae for products and extensions of the base field. Since the proof is analog to \autocite[Prop.~5.37, 5.38]{AlgGeom-Goertz} we skip it.
	\begin{Prop}
		Let $k$ be field. Let $X$, $Y$ be non-empty $k$-schemes locally of finite expansion, and let $K$ be a field extension of $k$. Then 
		\begin{enumerate}[label=(\arabic*)]
			\item $\dim(X \times_k Y) = \dim(X) + \dim(Y)$,
			
			\item $\dim(X) = \dim(X \otimes_k K)$.
		\end{enumerate}
	\end{Prop}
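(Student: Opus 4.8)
The plan is to reduce both equalities to the affine, Noether-normalized setting of Section~1.3 and then argue entirely in commutative algebra. Recall that the Krull dimension of a scheme is the supremum of the dimensions of the members of any of its open affine covers. Covering $X=\bigcup_i\Spec A_i$ and $Y=\bigcup_j\Spec B_j$ by affines of finite expansion over $k$, and using $(\Spec A_i)\times_k(\Spec B_j)=\Spec(A_i\otimes_kB_j)$ together with $(\Spec A_i)\otimes_kK=\Spec(A_i\otimes_kK)$, the proposition reduces to the two ring-theoretic statements $\dim(A\otimes_kB)=\dim A+\dim B$ and $\dim(A\otimes_kK)=\dim A$, for $k$-algebras $A,B$ of finite expansion and a field extension $K/k$. (Each affine piece has finite dimension, bounded by its number of quasi-generators, so the bookkeeping with suprema is unproblematic even if $X$ or $Y$ fails to be quasi-compact.)

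For the affine form of (1), I would choose algebraically independent quasi-generating systems $(x_1,\dots,x_d)$ of $A$ and $(y_1,\dots,y_e)$ of $B$, so that $d=\dim A$ and $e=\dim B$ by \Cref{Prop: Dim. of aff. scheme over a field}, and $k[x_1,\dots,x_d]\hookrightarrow A$, $k[y_1,\dots,y_e]\hookrightarrow B$ are integral. Factoring $k[x_1,\dots,x_d]\otimes_k k[y_1,\dots,y_e]\to A\otimes_k k[y_1,\dots,y_e]\to A\otimes_k B$ and invoking stability of integral morphisms under base change shows the composite $k[x_1,\dots,x_d]\otimes_k k[y_1,\dots,y_e]\to A\otimes_k B$ is integral; since tensoring over the field $k$ preserves injectivity, it is also injective, and its source is the polynomial ring $k[x_1,\dots,x_d,y_1,\dots,y_e]$, which has dimension $d+e$. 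As an injective integral extension preserves Krull dimension \autocite[Cor.~5.7]{Kemper-CommAlg}, we obtain $\dim(A\otimes_kB)=d+e=\dim A+\dim B$. The affine form of (2) is the same argument with $B$ replaced by $K$: the base change $k[x_1,\dots,x_d]\otimes_kK=K[x_1,\dots,x_d]\hookrightarrow A\otimes_kK$ is integral and injective, hence $\dim(A\otimes_kK)=\dim K[x_1,\dots,x_d]=d=\dim A$.

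I do not anticipate a real obstacle: this is the finite-expansion analogue of \autocite[Prop.~5.37, 5.38]{AlgGeom-Goertz}, and once the quasi-generating-system machinery of Section~1.3 is available the proof is elementary. The only points deserving a word of care are that $k[x_1,\dots,x_d]\otimes_k k[y_1,\dots,y_e]$ really is a polynomial ring in $d+e$ variables over $k$ (the usual coproduct isomorphism of polynomial rings), that $A\otimes_kB$ and $A\otimes_kK$ are nonzero (automatic since $k$ is a field and $A,B$ are nonzero), and that the maps used are genuinely injective (again because we work over a field). I would handle each of these in a single line rather than expand them.
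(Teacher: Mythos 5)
Your proof is correct and is essentially the proof the paper intends but omits: the paper only remarks that the argument is analogous to \autocite[Prop.~5.37, 5.38]{AlgGeom-Goertz}, and your reduction to the affine case followed by base-changing the integral injections $k[x_1,\dots,x_d]\hookrightarrow A$ and $k[y_1,\dots,y_e]\hookrightarrow B$ to an injective integral extension $k[x_1,\dots,x_d,y_1,\dots,y_e]\hookrightarrow A\otimes_k B$ (resp.\ $K[x_1,\dots,x_d]\hookrightarrow A\otimes_k K$) is exactly the finite-expansion analogue of that argument, using the quasi-generating-system machinery of Section~1.3 in the same way as \Cref{Prop: Dim. of aff. scheme over a field}. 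One citation nit: the invariance of Krull dimension under injective integral extensions is \autocite[Cor.~8.13]{Kemper-CommAlg}, while \autocite[Cor.~5.7]{Kemper-CommAlg} gives $\dim k[x_1,\dots,x_n]=n$; the paper cites the pair together for precisely this two-step conclusion.
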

	
	\subsection{Normal one-dimensional domains of finite expansion are Pr\"ufer domains}
	Let $C$ be a normal proper integral curve over a field $k$. It is well-known that for a closed point $x \in C$ the local ring $\mathcal{O}_{C,x}$ is a discrete valuation ring (cf.\ \autocite[Rem.~15.23]{AlgGeom-Goertz}). In this section we will establish a similar result for schemes of finite expansion over $k$.
	\begin{Th}\label{Th: localization at max. ideal is valuation ring}
		Let $k$ be a field, and let $A$ be a normal $k$-domain of finite expansion of dimension $1$. Then $A$ is a Pr\"ufer domain. I.e., if $\mathfrak{m} \subset A$ is a maximal ideal, then $A_\mathfrak{m}$ is a valuation ring.
	\end{Th}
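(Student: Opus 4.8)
The plan is to exploit the special shape of a one-dimensional $k$-algebra of finite expansion. By \Cref{Prop: Dim. of aff. scheme over a field} the algebra $A$ has exactly one algebraically independent quasi-generator, so I may fix $t \in A$, transcendental over $k$, with $k[t] \subseteq A$ an integral extension; write $K := \Frac(A)$. Since $A$ is normal and integral over $k[t]$, a short argument (any element of the integral closure of $k[t]$ in $K$ is integral over $A \subseteq K$, hence lies in $A$) identifies $A$ with the integral closure of the polynomial ring $k[t]$ in $K$, and $K/k(t)$ is algebraic, as $\trdeg_k K = \dim A = 1$ and $t \in K$. So the statement reduces to: the integral closure of the PID $k[t]$ in an algebraic extension $K$ of $k(t)$ is a Pr\"ufer domain.

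Fix a maximal ideal $\mathfrak{m} \subseteq A$; since $\dim A = 1$ we have $\mathfrak{m} \neq 0$, and we must show that $A_\mathfrak{m}$ is a valuation ring with fraction field $K$. First I would write $K$ as the directed union of its finite subextensions $K_\lambda$ of $k(t)$ and set $A_\lambda :=$ the integral closure of $k[t]$ in $K_\lambda$; since every element of $A$ is integral over $k[t]$ it lies in some $A_\lambda$, so $A = \bigcup_\lambda A_\lambda$ is a directed union. Each $A_\lambda$ is a normal one-dimensional domain, and it is Noetherian by the Krull--Akizuki theorem, hence Dedekind; therefore every localization of $A_\lambda$ at a maximal ideal is a discrete valuation ring.

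Next I would follow $\mathfrak{m}$ down the tower. Put $\mathfrak{m}_\lambda := \mathfrak{m} \cap A_\lambda$. Since $A$ is integral over $A_\lambda$, the ideal $\mathfrak{m}_\lambda$ is maximal in $A_\lambda$; and for $A_\lambda \subseteq A_\mu$ one has $\mathfrak{m}_\mu \cap A_\lambda = \mathfrak{m}_\lambda$, so $(A_\lambda)_{\mathfrak{m}_\lambda} \subseteq (A_\mu)_{\mathfrak{m}_\mu}$ and $A_\mathfrak{m} = \bigcup_\lambda (A_\lambda)_{\mathfrak{m}_\lambda}$. Thus $A_\mathfrak{m}$ is an increasing directed union of valuation rings whose fraction fields exhaust $K$: given $0 \neq x \in K$, pick $\lambda$ with $x \in K_\lambda$; since $(A_\lambda)_{\mathfrak{m}_\lambda}$ is a valuation ring, either $x$ or $x^{-1}$ lies in it, hence in $A_\mathfrak{m}$. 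So $A_\mathfrak{m}$ is a valuation ring, and $A$ is a Pr\"ufer domain.

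I expect the main obstacle to be the failure of Noetherianity: when $K/k(t)$ is infinite, $A$ (and $A_\mathfrak{m}$) need not be Noetherian, so one cannot argue directly that $A_\mathfrak{m}$ is a discrete valuation ring — this is precisely what forces the passage to the finite layers $A_\lambda$. A related subtlety is that $k$ may be imperfect, so $A_\lambda$ need not be a finite $k[t]$-module; it is Krull--Akizuki, rather than the naive finiteness of integral closure, that still yields Noetherianity of $A_\lambda$. Alternatively one could bypass the tower by citing the classical fact that the integral closure of a Pr\"ufer (in particular of a principal ideal) domain in an algebraic extension of its fraction field is again Pr\"ufer, but the directed-union argument stays closer to the machinery already developed here.
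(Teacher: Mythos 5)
Your proof is correct, but it follows a genuinely different route from the paper: the paper's entire proof of this theorem is a one-line citation of \autocite[Ch.~III,\S1,Thm.~1.2]{Fuchs-2001-Modules}, which is essentially the ``classical fact'' you mention in your closing sentence --- that the integral closure of a Pr\"ufer (here: principal ideal) domain in an algebraic extension of its fraction field is again Pr\"ufer. What you have done is (a) carry out explicitly the reduction that the paper leaves implicit, namely choosing a transcendental quasi-generator $t$ and identifying the normal domain $A$ with the integral closure of $k[t]$ in $K = \Frac(A)$ (this step is needed even to apply the cited theorem, so spelling it out is a genuine service), and (b) reprove the classical fact in this special case by writing $A$ as a directed union of the Dedekind rings $A_\lambda$ obtained from the finite subextensions $K_\lambda/k(t)$, with Krull--Akizuki supplying Noetherianity of each layer, and then exhibiting $A_\mathfrak{m}$ as a directed union of discrete valuation rings whose fraction fields exhaust $K$. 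All the individual steps check out: the compatibility $\mathfrak{m}_\mu \cap A_\lambda = \mathfrak{m}_\lambda$, the identity $A_\mathfrak{m} = \bigcup_\lambda (A_\lambda)_{\mathfrak{m}_\lambda}$, and the ``$x$ or $x^{-1}$'' test are all handled properly. The paper's citation buys brevity; your argument buys self-containedness and makes visible exactly where non-Noetherianity enters --- in particular it explains structurally why $A_\mathfrak{m}$ is a union of discrete valuation rings but need not itself be discrete, which is illustrated by the value group $\mathbb{Q}$ in \Cref{Ex: counterexample for normality}(2).
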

	\begin{proof}
		This is an application of \autocite[Ch.~III,\S1,Thm. 1.2]{Fuchs-2001-Modules}.
	\end{proof}
	We provide counterexamples that normality is a necessary assumption and that in general we do not obtain a discrete valuation ring.
	\begin{Ex}\label{Ex: counterexample for normality}
		Let $k$ be a field.
		\begin{enumerate}
			\item The $k$-algebra $A = k[x^2,x^3] \subset k[x]$ is of finite expansion and of dimension $1$. It is not normal, because $x \in \Frac(A)$ is integral over $A$ however $x \not \in A$. If we localize $A$ at $\mathfrak{m} = (x^2,x^3)$ we again have that $x, x^{-1} \not \in A_\mathfrak{m}$, so $A_\mathfrak{m}$ is not a valuation ring.
			
			\item Let $B = k[\,x^\frac{1}{n} \mid n \in \mathbb{N}\,]$. Then $B$ is of finite expansion, normal and of dimension $1$. Consider the maximal ideal $\mathfrak{m} = (\,x^\frac{1}{n} \mid n \in \mathbb{N}\,)$. By \Cref{Th: localization at max. ideal is valuation ring} $B_\mathfrak{m}$ is a valuation ring and it has value group $\mathbb{Q}$. Thus $B_\mathfrak{m}$ is not a discrete valuation ring.
		\end{enumerate}
	\end{Ex}
	In the next Corollary we provide the geometric formulation of \Cref{Th: localization at max. ideal is valuation ring}.
	\begin{Cor}\label{Cor: integral normal scheme of finite expansion dim 1 => local rings are valuation rings}
		Let $k$ be a field, and let $X$ be a $k$-scheme which is normal, integral, of finite expansion and of dimension $1$. Let $x \in X$ be a closed point. Then $\mathcal{O}_{X,x}$ is a valuation ring.
	\end{Cor}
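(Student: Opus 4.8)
The plan is to reduce the statement to the affine, purely algebraic \Cref{Th: localization at max. ideal is valuation ring}. First I would pick an affine open neighbourhood $U = \Spec(A)$ of $x$ in $X$. Since $X$ is integral, $A$ is a domain; since $X$ is normal, $A$ is integrally closed in $\Frac(A)$, normality being an affine-local property; and since $X$ is of finite expansion over $k$, part (a) of \Cref{Def: Schemes of finite expansion}, applied with the trivial affine open cover $\{\Spec(k)\}$ of $\Spec(k)$ and the affine open $U \subset X$, shows that the $k$-algebra $A$ is of finite expansion.

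Next I would identify the relevant prime ideal and compute the dimension. The point $x$ corresponds to a prime ideal $\mathfrak{m} \subset A$; because $\{x\}$ is closed in $X$ it is in particular closed in the open subset $U$, so $\mathfrak{m}$ is a maximal ideal of $A$. Moreover $U$ is a non-empty open subscheme of the irreducible $k$-scheme $X$, so \Cref{Thm: trdeg = dim}(2) yields $\dim(A) = \dim(U) = \dim(X) = 1$. Thus $A$ is a normal $k$-domain of finite expansion of dimension $1$, and \Cref{Th: localization at max. ideal is valuation ring} shows that $A_{\mathfrak{m}}$ is a valuation ring. Since $\mathcal{O}_{X,x} = A_{\mathfrak{m}}$, the corollary follows.

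There is no genuinely hard step here: the substance is entirely contained in the already-established \Cref{Th: localization at max. ideal is valuation ring}, and everything else is the standard transfer of the properties \emph{integral}, \emph{normal}, \emph{of finite expansion} and \emph{of dimension one} from $X$ to an affine chart, together with the elementary fact that a closed point of $X$ lying in $U$ remains closed in $U$ — this last point being exactly what guarantees that $\mathfrak{m}$ is maximal rather than merely prime, which is what \Cref{Th: localization at max. ideal is valuation ring} requires as input.
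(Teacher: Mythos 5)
Your proof is correct and follows exactly the route the paper takes: its proof is the one-line citation of \Cref{Thm: trdeg = dim}~(2) together with \Cref{Th: localization at max. ideal is valuation ring}, and your write-up simply makes the affine-local reduction and the maximality of $\mathfrak{m}$ explicit. Nothing further is needed.
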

	\begin{proof}
		This is an application of \Cref{Thm: trdeg = dim} (2) and \Cref{Th: localization at max. ideal is valuation ring}.
	\end{proof}
	
	\subsection{Closed points correspond to valuation rings}
	Let $C$ be a normal proper integral curve over a field $k$, then one has a bijection between the closed points of $C$ and discrete valuation rings inside the function field $K(C)$ (cf.\ \autocite[Rem.~15.23]{AlgGeom-Goertz}). We can extend this result in a similar fashion to universally closed curves.
	\begin{Th}\label{Th: closed points correspond to valuation rings}
		Let $k$ be a field, and let $X$ be a normal integral universally closed curve over $k$. Then one has a bijection between the sets
		\begin{equation*}
			\left\{\text{closed points of }X\right\}\xlongleftrightarrow{1:1}\left\{\begin{matrix}\text{valuation rings}\\\mathcal{O} \subset K(X)\\ \text{with }k^\times \subset \mathcal{O}^\times\end{matrix}\right\}.
		\end{equation*}
	\end{Th}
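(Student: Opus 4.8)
The plan is to show that $x \mapsto \mathcal{O}_{X,x}$ is a well-defined bijection. Throughout write $K := K(X) = \kappa(\eta)$, where $\eta$ is the generic point of $X$, and recall $\dim X = 1$ (so $\eta$ is not closed) and $\trdeg_k K = 1$. First I would check well-definedness: for a closed point $x$ one has $x \neq \eta$, hence $\mathcal{O}_{X,x} \subsetneq K$, and $\mathcal{O}_{X,x}$ is a valuation ring by \Cref{Cor: integral normal scheme of finite expansion dim 1 => local rings are valuation rings} (a universally closed curve is of finite expansion over $k$); moreover $k^\times \subseteq \mathcal{O}_{X,x}^\times$ because every element of $k^\times$ is already invertible in $k \subseteq \mathcal{O}_{X,x}$. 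Thus the assignment lands in the indicated set, and it remains to prove injectivity and surjectivity.

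For injectivity, suppose closed points $x, x'$ satisfy $\mathcal{O}_{X,x} = \mathcal{O}_{X,x'} =: \mathcal{O}$ as subrings of $K$. The canonical morphisms $\Spec \mathcal{O}_{X,x} \to X$ and $\Spec \mathcal{O}_{X,x'} \to X$ then produce two morphisms $a, b : \Spec \mathcal{O} \to X$ that agree with the inclusion of $\eta$ on the generic point $\Spec K$ of $\Spec \mathcal{O}$. Since $X \to \Spec k$ is separated its diagonal is a closed immersion, and pulling back the corresponding ideal sheaf along $(a,b) : \Spec \mathcal{O} \to X \times_k X$ yields an ideal of the domain $\mathcal{O}$ that vanishes at the generic point, hence is zero; so $(a,b)$ factors through the diagonal, i.e.\ $a = b$, and therefore $x = a(\mathfrak{m}_\mathcal{O}) = b(\mathfrak{m}_\mathcal{O}) = x'$. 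This is just the uniqueness half of the valuative criterion of separatedness and uses only that $X$ is separated over $k$.

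For surjectivity, let $\mathcal{O} \subsetneq K$ be a valuation ring with $k^\times \subseteq \mathcal{O}^\times$; this forces $k \subseteq \mathcal{O}$, so there is a morphism $\Spec \mathcal{O} \to \Spec k$, and together with the inclusion $\Spec K \to X$ of the generic point we get a commutative square over $\Spec k$. Since $X \to \Spec k$ is quasi-compact, separated and universally closed, the valuative criterion of universal closedness provides a field extension $K'/K$, a valuation ring $\mathcal{O}' \subseteq K'$ with $\mathcal{O}' \cap K = \mathcal{O}$, and a morphism $g : \Spec \mathcal{O}' \to X$ lifting the square. Let $x \in X$ be the image under $g$ of the closed point of $\Spec \mathcal{O}'$. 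Then $g$ induces a local homomorphism $\mathcal{O}_{X,x} \to \mathcal{O}'$ which on fraction fields is the embedding $K \hookrightarrow K'$ (since $g$ sends the generic point to $\eta$ via that embedding), and since $X$ is integral, $\mathcal{O}_{X,x} \subseteq K$; hence $\mathcal{O}_{X,x} \subseteq \mathcal{O}' \cap K = \mathcal{O}$. Because $\mathcal{O} \neq K = \mathcal{O}_{X,\eta}$ the point $x$ differs from $\eta$, so $x$ is closed (as $X$ is irreducible of dimension $1$) and $\mathcal{O}_{X,x}$ is a one-dimensional valuation ring of $K$ by \Cref{Cor: integral normal scheme of finite expansion dim 1 => local rings are valuation rings}. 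Finally, every ring between $\mathcal{O}_{X,x}$ and $K$ is a localization of $\mathcal{O}_{X,x}$ at a prime, and its only primes are $(0)$ and $\mathfrak{m}_x$; since $\mathcal{O} \neq K$, the inclusion $\mathcal{O}_{X,x} \subseteq \mathcal{O}$ must be an equality, so $x$ is a closed point mapping to $\mathcal{O}$.

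The hard part will be surjectivity, and in particular the bookkeeping around the valuative criterion of universal closedness: it only gives a lift after passing to a field extension $K'/K$, and one has to see that intersecting back with $K$ — legitimate precisely because $X$ is integral with function field $K$ — renders the extension harmless. The two remaining ingredients are routine: a one-dimensional valuation ring is maximal among its proper overrings inside its fraction field, and normality (through \Cref{Cor: integral normal scheme of finite expansion dim 1 => local rings are valuation rings}) is exactly what upgrades $\mathcal{O}_{X,x} \subseteq \mathcal{O}$ to equality — without it the inclusion can be strict, as in \Cref{Ex: counterexample for normality}.
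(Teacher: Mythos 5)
Your proof is correct and follows essentially the same route as the paper: both directions hinge on the valuative criterion for the quasi-compact separated universally closed morphism $X \to \Spec(k)$ together with \Cref{Cor: integral normal scheme of finite expansion dim 1 => local rings are valuation rings}, and then a maximality property of valuation rings. The only cosmetic differences are that the paper invokes the form of the criterion that yields a unique lift $\Spec(\mathcal{O}) \to X$ directly (so no extension $K'/K$ has to be managed and then intersected back down), and it closes surjectivity via maximality of valuation rings in the domination order rather than via the classification of overrings of a one-dimensional valuation ring.
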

	\begin{proof}
	First we prove that every valuation ring in $K(X)$ comes from a closed point. \\
	Let $\mathcal{O} \subset K(X)$ be a valuation ring with $k^\times \subset \mathcal{O}^\times$. Then we can construct a commutative diagram
	\begin{equation*}
		\begin{tikzcd}
			\Spec(K(X)) \arrow[r, "\iota"]\arrow[d] & X\arrow[d] \\
			\Spec(\mathcal{O}) \arrow[r] & \Spec(k),
		\end{tikzcd}
	\end{equation*}
	where $\iota$ maps the unique point of $\Spec(K(X))$ to the generic point of $X$. By the generalized valuative criterion (cf.\ \autocite[Thm.~15.8]{AlgGeom-Goertz}) there exists a unique morphism $v: \Spec(\mathcal{O}) \to X$, which preserves commutativity in the diagram. Suppose the unique closed point $z \in \Spec(\mathcal{O})$ maps to the closed point $x \in X$, i.e., $v(z) = x$. Then we obtain the following local homomorphism of local rings
	\begin{equation*}
		\begin{split}
			&v^\sharp_z: (v^{-1}\mathcal{O}_X)_z \to \mathcal{O}_{\Spec(\mathcal{O}),z} \\
			\Rightarrow \; &v^\sharp_z: \mathcal{O}_{X,x} \to \mathcal{O}.
		\end{split}
	\end{equation*}
	For commutativity of the above diagram we must have that the generic point of $\mathcal{O}$ is mapped to the generic point of $X$. I.e., $v$ is a dominant morphism, but this makes $v^\sharp_z$ into an injective local homomorphism (see \autocite[\href{https://stacks.math.columbia.edu/tag/0CC1}{Tag 0CC1}]{stacks-project}). Hence $\mathcal{O}$ dominates $\mathcal{O}_{X,x}$. From \Cref{Cor: sep univ closed morph => finite exp} and \Cref{Cor: integral normal scheme of finite expansion dim 1 => local rings are valuation rings} it follows that $\mathcal{O}_{X,x}$ is also a valuation ring. Valuation rings are maximal among the domination order, thus $\mathcal{O} =\mathcal{O}_{X,x}$.
	
	Injectivity follows similar. Let $x,x' \in X$ be such that $\mathcal{O}_{X,x} \cong \mathcal{O}_{X,x'}$. Again by the valuative criterion we obtain a morphism $v: \Spec(\mathcal{O}_{X,x}) \cong \Spec(\mathcal{O}_{X,x'}) \to X$. If $z$ and $z'$ are the unique closed points of $\Spec(\mathcal{O}_{X,x})$ and $\Spec(\mathcal{O}_{X,x'})$, then we have that $x = v(z) = v(z') = x'$.
	\end{proof}
	\begin{Rem}\label{Rem: for a curve every valuation ring is discrete}
		From the theorem we can also conclude that for a normal proper integral curve $C$ over a field $k$ all valuation rings $\mathcal{O}$ contained in the function field $K(C)$ with $k^\times \subset \mathcal{O}^\times$ are discrete.
	\end{Rem}

	\section{Universally closed curves and extensions of transcendence degree 1}
	We have now developed all necessary tools to prove the main theorem. The proof is developed in a similar way as the original proof for curves presented in \autocite[Rem.~7.4.19]{EGAII}.
	
	\subsection{A homomorphism of function fields induces a morphism of universally closed curves}\label{Sec: A morphism between fields induces a morphism between schemes}
	On objects the association $X \mapsto K(X)$ is clear, but we must also check that there is a contravariant association of morphisms. In the first lemma we will establish this for dominant morphisms between integral schemes of finite expansion. Later in \Cref{Th: morphism constant or surjective and integral} we will prove that a morphism between integral universally closed curves is either constant or surjective. 
	\begin{Lem}\label{Lem: dominant morphism defines morphism of function fields}
		Let $k$ be a field, let $X$ and $Y$ be integral $k$-schemes of finite expansion, and let $f: X \to Y$ be a dominant morphism of $k$-schemes.
		\begin{enumerate}[label=(\arabic*)]
			\item $f$ induces a homomorphism of fields $f^*: K(Y) \to K(X)$ in a functorial way.
			\item  If $\dim(X) = \dim(Y)$, then $K(X)$ becomes via $f^*$ an algebraic field extension of $K(Y)$.
		\end{enumerate}
	\end{Lem}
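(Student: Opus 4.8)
The plan is to construct $f^*$ from the homomorphism that $f$ induces on stalks at the generic point, and then to deduce the algebraicity statement in (2) from the identity $\dim = \trdeg_k$ of \Cref{Thm: trdeg = dim} together with the additivity of transcendence degree in a tower.

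First I would check that $f$ carries the generic point $\eta_X$ of $X$ to the generic point $\eta_Y$ of $Y$: since $f$ is dominant, $\overline{\{f(\eta_X)\}} \supseteq \overline{f(X)} = Y$, so $f(\eta_X)$ is a generic point of the irreducible scheme $Y$ and hence equals $\eta_Y$. The morphism $f$ then induces a local homomorphism of stalks $f^\sharp_{\eta_X}\colon \mathcal{O}_{Y,\eta_Y} \to \mathcal{O}_{X,\eta_X}$, and because $X$ and $Y$ are integral we may identify $\mathcal{O}_{Y,\eta_Y} = K(Y)$ and $\mathcal{O}_{X,\eta_X} = K(X)$; I set $f^* := f^\sharp_{\eta_X}$. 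A ring homomorphism between fields is automatically injective, and since $f$ is a morphism of $k$-schemes the map on stalks respects the $k$-algebra structure, so $f^*$ is an embedding of extension fields of $k$. Functoriality — namely $(\id_X)^* = \id_{K(X)}$, and $(g \circ f)^* = f^* \circ g^*$ for composable dominant morphisms (a composite of dominant morphisms is again dominant) — is immediate from the analogous properties of the maps on stalks. This proves (1).

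For (2), assume $\dim(X) = \dim(Y)$. These dimensions are finite: covering $X$ (resp.\ $Y$) by non-empty affine opens and invoking \Cref{Prop: Dim. of aff. scheme over a field} shows that the dimension equals a number of algebraically independent quasi-generators. By \Cref{Thm: trdeg = dim}(1) we have $\trdeg_k K(X) = \dim(X)$ and $\trdeg_k K(Y) = \dim(Y)$. Viewing $K(X)$ as an extension of $K(Y)$ via $f^*$ gives a tower $k \subseteq K(Y) \subseteq K(X)$, and additivity of transcendence degree (\autocite[Satz~23.4]{Algebra-Karpf}) yields
\begin{equation*}
	\trdeg_k K(X) = \trdeg_k K(Y) + \trdeg_{K(Y)} K(X).
\end{equation*}
Since the two outer terms are equal and finite, $\trdeg_{K(Y)} K(X) = 0$, i.e.\ $K(X)$ is algebraic over $K(Y)$.

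I do not anticipate a serious obstacle here. The only points that need a moment's care are verifying that $f$ sends the generic point to the generic point (which uses dominance and irreducibility of $X$ and $Y$) and recording that the transcendence degrees involved are finite, so that the tower formula may be rearranged to conclude $\trdeg_{K(Y)} K(X) = 0$.
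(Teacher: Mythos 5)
Your proposal is correct and follows essentially the same route as the paper: identify $f^*$ with the induced map on stalks at the generic point (which dominance sends to the generic point of $Y$), and deduce (2) from $\dim = \trdeg_k$ together with the additivity of transcendence degree in the tower $k \subseteq K(Y) \subseteq K(X)$. You merely spell out details the paper leaves implicit, such as the finiteness of the transcendence degrees needed to rearrange the tower formula.
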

	\begin{proof}
		For a dense morphism the generic point of $X$ is mapped to the generic point of $Y$, and the morphism $f^*$ is the induced morphism on stalks of the structure sheaves. It is a homomorphism of fields, hence injective. Under the assumptions of (2) the transcendence degrees over $k$ agree by \Cref{Thm: trdeg = dim} (1), thus the field extension is algebraic.
	\end{proof}
	The first major step is to establish that a homomorphism of function fields of universally closed curves is induced by a unique morphism of universally closed curves. This can be seen as adaption of \autocite[Cor.~7.4.13)]{EGAII} to universally closed curves.
	\begin{Th}\label{Th: morphism of fields comes from morphism of schemes}
		Let $k$ be a field. Let $X$ be a normal separated integral $k$-scheme of finite expansion of dimension $1$, and let $Y$ be an integral universally closed curve over $k$. Then every $k$-homomorphism $\alpha: K(Y) \to K(X)$ is of the form $f^*$ for a uniquely determined morphism $f: X \to Y$. 
	\end{Th}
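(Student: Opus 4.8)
The plan is to construct $f$ by gluing local pieces, one around the generic point of $X$ and one around each closed point, exploiting the two structural results already at our disposal: every local ring of $X$ at a closed point is a valuation ring of $K(X)$, and the valuation rings of $K(Y)$ correspond bijectively to the closed points of $Y$. The assignment $f \mapsto f^{*}$ is then recovered from the generic‑point stalk map, and uniqueness follows from a standard equaliser argument using that $Y$ is separated. So first I would set up the transcendence‑degree bookkeeping, then build the local morphisms, then glue, and finally read off $f^{*}=\alpha$ and uniqueness.

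In detail, write $\xi$ for the generic point of $X$, so $\kappa(\xi)=K(X)$ and $\operatorname{trdeg}_{k}K(X)=\dim X=1$ by \Cref{Thm: trdeg = dim}; similarly $\operatorname{trdeg}_{k}K(Y)=1$ since $Y$ is a universally closed curve, hence $K(X)$ is algebraic over the subfield $\alpha(K(Y))$. For a closed point $x\in X$ the local ring $\mathcal{O}_{X,x}$ is a valuation ring of $K(X)$ with $k^{\times}\subseteq\mathcal{O}_{X,x}^{\times}$ by \Cref{Cor: integral normal scheme of finite expansion dim 1 => local rings are valuation rings}, and it is a proper subring of $K(X)$ because $\mathfrak{m}_{x}\neq 0$ ($x$ is not the generic point of a one‑dimensional integral scheme). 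Contracting along $\alpha$, the subring $\mathcal{O}:=\alpha^{-1}(\mathcal{O}_{X,x})\subseteq K(Y)$ is again a valuation ring with $k^{\times}\subseteq\mathcal{O}^{\times}$, and it is proper, since a proper valuation ring of a field contracts to a proper valuation ring along any subfield over which the field is algebraic. By \Cref{Th: closed points correspond to valuation rings}, applied to $Y$, there is a unique closed point $y_{x}\in Y$ with $\mathcal{O}_{Y,y_{x}}=\mathcal{O}$. Consequently $\alpha$ restricts to an injective local $k$‑homomorphism $\phi_{x}\colon\mathcal{O}_{Y,y_{x}}\to\mathcal{O}_{X,x}$, and since $X$ is normal and $Y$ is locally of finite expansion over $\Spec(k)$ (\Cref{Cor: sep univ closed morph => finite exp}), \Cref{Prop: extension of morphism of finite expansion} yields an open neighbourhood $U_{x}$ of $x$ and a $k$‑morphism $f_{x}\colon U_{x}\to Y$ with $f_{x}(x)=y_{x}$ inducing $\phi_{x}$ on stalks.

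Next I would glue the $f_{x}$. Because $X$ is integral of dimension $1$, every point other than $\xi$ is closed and $\xi$ lies in every non‑empty open, so $\{U_{x}\}$ covers $X$. A short check — using that $\phi_{x}$ is injective and $Y$ is integral, so that on an affine chart around $y_{x}$ the morphism $f_{x}$ pulls back to an injection of domains — shows that $f_{x}$ sends $\xi$ to the generic point of $Y$ and induces $\alpha$ on function fields. Hence on $U_{x}\cap U_{x'}$ the morphisms $f_{x}$ and $f_{x'}$ agree at $\xi$, i.e.\ they restrict to one and the same morphism $\Spec K(X)\to Y$; their equaliser is a closed subscheme of $U_{x}\cap U_{x'}$ (as $Y$ is separated) containing $\xi$, hence all of $U_{x}\cap U_{x'}$ since $X$ is reduced. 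The $f_{x}$ therefore glue to a morphism $f\colon X\to Y$, which is dominant and satisfies $f^{*}=\alpha$ by \Cref{Lem: dominant morphism defines morphism of function fields}. For uniqueness, any $g\colon X\to Y$ with $g^{*}=\alpha$ agrees with $f$ at $\xi$ (both restrict there to the morphism $\Spec K(X)\to Y$ determined by $\alpha$), so the same equaliser argument forces $f=g$.

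The step I expect to be the main obstacle is the verification that $\mathcal{O}=\alpha^{-1}(\mathcal{O}_{X,x})$ is a \emph{proper} valuation ring of $K(Y)$: this is exactly what guarantees that \Cref{Th: closed points correspond to valuation rings} returns a closed point of $Y$ rather than the generic point, and it is the one place where the finiteness of the transcendence degree (equivalently, that $Y$ is one‑dimensional and universally closed) is essential. Everything else is organisational, built on \Cref{Cor: integral normal scheme of finite expansion dim 1 => local rings are valuation rings}, \Cref{Prop: extension of morphism of finite expansion}, and separatedness of $Y$; in particular normality of $X$ enters only through those two results, and separatedness of $X$ plays no role in this direction.
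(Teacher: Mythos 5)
Your overall architecture (local construction at each closed point, spreading out via \Cref{Prop: extension of morphism of finite expansion}, gluing by the equaliser argument with $Y$ separated, uniqueness via the generic point) is sound and close to the paper's, and your observation that properness of the contracted valuation ring is the crux is a good one — the algebraicity argument you give for it is correct. However, there is one genuine gap: to produce the point $y_x$ you invoke \Cref{Th: closed points correspond to valuation rings} for $Y$, and that theorem requires $Y$ to be a \emph{normal} integral universally closed curve. The statement you are proving only assumes $Y$ is an integral universally closed curve. For a non-normal $Y$ the bijection genuinely fails: at a non-normal closed point $\mathcal{O}_{Y,y}$ is not a valuation ring, and the valuation ring $\mathcal{O}=\alpha^{-1}(\mathcal{O}_{X,x})$ need not equal $\mathcal{O}_{Y,y}$ for \emph{any} point $y$. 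So as written your argument proves the theorem only under the additional hypothesis that $Y$ is normal.

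The fix is exactly where your route and the paper's diverge. The paper does not pass through the closed-points/valuation-rings bijection at all: it applies the generalized valuative criterion for universally closed morphisms directly to the square with $\Spec(K(X))\to Y$ on top and $\Spec(\mathcal{O}_{X,x})\to\Spec(k)$ on the bottom, obtaining a unique $g_x:\Spec(\mathcal{O}_{X,x})\to Y$; the image of the closed point is then some point $y$ of $Y$ whose local ring is merely \emph{dominated} by $\mathcal{O}_{X,x}$ (no normality of $Y$ needed), and the composite $\mathcal{O}_{Y,y}\to\mathcal{O}_{X,x}$ is spread out using \Cref{Lem: split of ring homomorphism}. You could repair your proof the same way: apply the valuative criterion to $\Spec(\mathcal{O})\to\Spec(k)$ (or directly to $\Spec(\mathcal{O}_{X,x})$) to get a center $y_x\in Y$ of the valuation ring $\mathcal{O}$, and then use the composite local homomorphism $\mathcal{O}_{Y,y_x}\hookrightarrow\mathcal{O}\xrightarrow{\alpha}\mathcal{O}_{X,x}$ in place of an equality $\mathcal{O}_{Y,y_x}=\mathcal{O}$. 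Everything downstream of that point in your write-up (coverage by the $U_x$, agreement at the generic point, gluing, $f^*=\alpha$, uniqueness) goes through unchanged and matches the paper.
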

	\begin{proof}
	We immediately obtain a morphism 
	\begin{equation*}
		\Spec(\alpha): \Spec(K(X)) \to \Spec(K(Y)) \to Y,	
	\end{equation*}
	 where the second morphism corresponds to the inclusion of the generic point of $Y$. Let $x \in X$ be a closed point and let $U = \Spec(B) \subset X$ be an affine open neighborhood of $x$. By assumption $U$ is normal, integral, of finite expansion and one-dimensional, further $x$ corresponds to a maximal ideal $\mathfrak{m}_x \subset B$. Hence by \Cref{Cor: integral normal scheme of finite expansion dim 1 => local rings are valuation rings} $\mathcal{O}_{X,x} \cong B_{\mathfrak{m}_x}$ is a valuation ring. Now we consider the following commutative diagram of schemes:
	\begin{equation}
		\begin{tikzcd}\label{Equ: valuative criterion diagram}
			\Spec(K(X))				\arrow[r, "\Spec(\alpha)"]\arrow[d]	&	Y	 \arrow[d]	\\
			\Spec(\mathcal{O}_{X,x})	\arrow[r]			&	\Spec(k).
		\end{tikzcd}
	\end{equation}
	By the generalized valuative criterion (see \autocite[Thm.~15.8]{AlgGeom-Goertz}) there exists a unique morphism $g_x: \Spec(\mathcal{O}_{X,x}) \to Y$.
	
	Let $V = \Spec(A)$ be an affine open neighborhood of finite expansion of $g_x(x)$ in $Y$. Then $g_x^{-1}(V)$ is open in $\Spec(\mathcal{O}_{X,x})$ and contains $x$, thus it is equal to $\Spec(\mathcal{O}_{X,x})$. So we obtain an induced homomorphism of rings $A \to \mathcal{O}_{X,x} \cong B_{\mathfrak{m}_x}$. By \Cref{Lem: split of ring homomorphism} the homomorphism $A \to B_{\mathfrak{m}_x}$ splits as $A \to B_\omega \to B_{\mathfrak{m}_x}$ for some $\omega \in B \setminus \mathfrak{m}_x$. Thus on the level of spectra we obtain an extension of $g_x$ to $\tilde{g}_x: D_U(\omega) \to V$, where $D_U(\omega)$ is an affine open neighborhood of $x$ in $X$. 
	
	For varying $x$ we would like to glue the extensions $\tilde{g}_x$ to a unique morphism $f: X \to Y$. According to \autocite[Prop.~3.5]{AlgGeom-Goertz} it is enough to show that $\tilde{g}_x: D_U(\omega) \to Y$ and $\tilde{g}_{x'}: D_{U'}(\omega') \to Y$ coincide on $D_U(\omega) \cap D_{U'}(\omega')$. Let us consider the equalizer $\Eq(\tilde{g}_{x},\tilde{g}_{x'})$, since $Y$ is separated the equalizer is a closed subscheme of $D_U(\omega) \cap D_{U'}(\omega')$ (cf.\ \autocite[Def.~and~Prop.~9.7]{AlgGeom-Goertz}). By construction $\tilde{g}_{x}$ and $\tilde{g}_{x'}$ preserve commutativity in Diagram \ref{Equ: valuative criterion diagram}, so we must have that $\Spec(K(X)) \subset \Eq(\tilde{g}_{x},\tilde{g}_{x'})$ and hence $\Eq(\tilde{g}_{x},\tilde{g}_{x'}) = D_U(\omega) \cap D_{U'}(\omega')$. Now we glue these extensions to a unique morphism $f: X \to Y$. 
	\end{proof}
	Analog to curves we can define the degree of a morphism between integral universally closed curves (cf.\ \autocite[p.~498]{AlgGeom-Goertz}).
	\begin{Def}[Degree of a morphism]\label{Def: degree of a morphism}
		Let $X$ and $Y$ be integral universally closed curves over a field $k$, and let $f: X \to Y$ be a morphism. If $f$ has dense image we define the degree of $f$ as $$\degree(f) := \Big[ K(X) : f^*\big(K(Y)\big) \Big].$$ If the image of $f$ is not dense we define the degree to be $0$.
	\end{Def} 
	The next corollary can be seen as generalization of \autocite[Cor.~7.4.16]{EGAII}.
	\begin{Cor}\label{Cor: morphism of degree 1 is an isomorphism}
		Let $k$ be a field. Let $f: X \to Y$ be a morphism of normal integral universally closed curves over $k$ which is of degree $1$. Then $f$ is an isomorphism.
	\end{Cor}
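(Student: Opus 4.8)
The plan is to construct an inverse morphism by feeding the inverse field homomorphism into \Cref{Th: morphism of fields comes from morphism of schemes}, and then to use the uniqueness clause of that theorem to identify the two composites with the identities.

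First I would unpack the hypothesis. By \Cref{Def: degree of a morphism}, $\degree(f) = 1$ forces $f$ to have dense image (otherwise its degree would be $0$), so $f$ is dominant and by \Cref{Lem: dominant morphism defines morphism of function fields} induces a $k$-embedding $f^*: K(Y) \hookrightarrow K(X)$; the condition $\bigl[K(X) : f^*(K(Y))\bigr] = 1$ then says precisely that $f^*$ is an isomorphism of fields. Both $X$ and $Y$ are normal integral universally closed curves, hence normal separated integral $k$-schemes of dimension $1$ that are of finite expansion over $\Spec(k)$ by \Cref{Cor: sep univ closed morph => finite exp}; in particular each of $X$ and $Y$ simultaneously satisfies the conditions imposed on ``$X$'' and on ``$Y$'' in the (slightly asymmetric) statement of \Cref{Th: morphism of fields comes from morphism of schemes}.

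Next I would apply \Cref{Th: morphism of fields comes from morphism of schemes} to the $k$-homomorphism $\alpha := (f^*)^{-1}: K(X) \to K(Y)$, now with $Y$ in the role of the source and $X$ in the role of the target, obtaining a unique morphism $g: Y \to X$ with $g^* = \alpha = (f^*)^{-1}$. Since in the construction of $g$ the morphism $\Spec(\alpha)$ sends the unique point of $\Spec(K(Y))$ to the generic point of $X$ and $g$ extends this, $g$ is dominant. Hence $g \circ f: X \to X$ and $f \circ g: Y \to Y$ are dominant, and the functoriality in \Cref{Lem: dominant morphism defines morphism of function fields} gives $(g \circ f)^* = f^* \circ g^* = \id_{K(X)}$ and $(f \circ g)^* = g^* \circ f^* = \id_{K(Y)}$.

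Finally I would invoke uniqueness. The morphism $\id_X: X \to X$ also induces $\id_{K(X)}$, so the uniqueness part of \Cref{Th: morphism of fields comes from morphism of schemes}, legitimate because $X$ may play both roles, yields $g \circ f = \id_X$; symmetrically $f \circ g = \id_Y$. Thus $f$ is an isomorphism with inverse $g$. I do not anticipate a real obstacle here: the argument is a formal consequence of the already established theorem together with functoriality of $X \mapsto K(X)$, and the only points that need a line of care are checking that every scheme involved meets the asymmetric hypotheses of \Cref{Th: morphism of fields comes from morphism of schemes} and that the composites are dominant so that the functoriality statement is applicable.
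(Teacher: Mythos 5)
Your proof is correct and takes essentially the same route as the paper: obtain $g: Y \to X$ from $(f^*)^{-1}$ via \Cref{Th: morphism of fields comes from morphism of schemes} and then identify $g \circ f$ and $f \circ g$ with the identities. The paper phrases that last step as an equalizer/separatedness argument rather than citing the uniqueness clause directly, but this is the same mechanism, and your extra care about dominance and the asymmetric hypotheses only makes the argument more complete.
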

	\begin{proof}
		$f^\ast$ induces a field extension of degree $1$, hence it is an isomorphism. By \Cref{Th: morphism of fields comes from morphism of schemes}, its inverse comes from a unique morphism $g: Y \to X$. Using the equalizer we conclude that $g$ is indeed the inverse of $f$.
	\end{proof}
	
	\subsection{Properties of morphisms of universally closed curves}\label{Non-constant morphisms are of finite expansion}
	It is well-known that a non-constant morphism between normal proper integral curves is either constant or surjective, finite and flat (cf.\ \autocite[Prop.~15.16]{AlgGeom-Goertz} and \autocite[\href{https://stacks.math.columbia.edu/tag/0CCK}{Tag 0CCK}]{stacks-project}). We will now prove the analog statement for universally closed curves.
	\begin{Th}\label{Th: morphism constant or surjective and integral}
		Let $k$ be a field, and let $f: X \to Y$ be a morphism between integral universally closed curves over $k$. Then $f$ is constant or quasi-compact, separated, universally closed and surjective. With additional assumptions the following assertions hold if $f$ is not constant.
		\begin{enumerate}[label=(\arabic*)]
			\item If $X$ is normal then $f$ is integral.
			\item If $Y$ is normal then $f$ is flat.
		\end{enumerate}
	\end{Th}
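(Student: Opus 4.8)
The plan is to split the statement into the topological dichotomy and the two refinements. First I would settle the dichotomy: since $X$ is irreducible, $\overline{f(X)}$ is an irreducible closed subset of the integral one-dimensional scheme $Y$, so either $f$ is dominant or $\overline{f(X)}$ is a single closed point $y$. In the latter case $f(X)=\{y\}$ and, $X$ being reduced, $f$ factors through $\Spec(\kappa(y))\hookrightarrow Y$, i.e.\ $f$ is constant. From now on I assume $f$ is dominant; then $f$ sends the generic point of $X$ to that of $Y$ and, by \Cref{Lem: dominant morphism defines morphism of function fields}, induces an embedding $f^{*}\colon K(Y)\hookrightarrow K(X)$.

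For the topological properties I would argue by cancellation against the structure morphisms $X\to\Spec(k)$ and $Y\to\Spec(k)$, which by \Cref{Def: universally close curve} are quasi-compact, separated and universally closed (in particular $X$ and $Y$ are qcqs). Since $X\to\Spec(k)$ is of finite expansion and $Y\to\Spec(k)$ is quasi-separated, \Cref{Cor: properties of morphisms of finite expansion}(2) shows $f$ is of finite expansion, hence quasi-compact; separatedness of $f$ follows from that of $X\to\Spec(k)$ by the standard cancellation property (cf.\ \autocite{AlgGeom-Goertz}). For universal closedness I would factor $f$ as the graph $X\to X\times_{k}Y$ followed by the projection to $Y$: the graph is a base change of the closed diagonal of $Y/k$ and the projection is a base change of the universally closed $X\to\Spec(k)$, so both are universally closed and hence so is $f$. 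A universally closed morphism is closed, so $f(X)$ is closed; being also dense, $f(X)=Y$, i.e.\ $f$ is surjective.

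For (1), assuming $X$ normal, I would use that $f$ is now separated, universally closed and between qcqs schemes, hence of finite expansion by \Cref{Cor: sep univ closed morph => finite exp}, so \Cref{Prop: properties of morphism of finite expansion}(1) yields a factorization $f=h\circ g$ with $g$ integral and $h$ proper. Replacing the middle scheme by the scheme-theoretic image of $g$ (which remains integral over $X$ by cancellation for affine and for universally closed morphisms, and which is one-dimensional because the dominant integral morphism from $X$ is, over affine opens, given by an injective integral ring extension, so its dimension equals $\dim X=1$), the morphism $h$ becomes a surjective, finite-type morphism of integral one-dimensional schemes. Then every fibre of $h$ is a proper closed subscheme of an integral one-dimensional scheme, hence zero-dimensional, and being of finite type over a field it is finite; so $h$ is proper with finite fibres, therefore finite, therefore affine. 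Composing with the affine $g$ shows $f$ is affine, and an affine universally closed morphism is integral (a standard characterisation of integral morphisms). I expect this to be the main obstacle: the delicate points are verifying that the scheme-theoretic image is genuinely integral of dimension $1$ (a $k$-algebra of finite expansion need not be Noetherian) and invoking \quotes{proper with finite fibres implies finite}.

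For (2), assuming $Y$ normal, flatness is local, so I would check that $\mathcal{O}_{X,x}$ is flat over $\mathcal{O}_{Y,y}$ for $y=f(x)$. If $y$ is the generic point this is trivial, as $\mathcal{O}_{Y,y}=K(Y)$ is a field; if $y$ is closed then $\mathcal{O}_{Y,y}$ is a valuation ring by \Cref{Cor: integral normal scheme of finite expansion dim 1 => local rings are valuation rings}. Since the map $\mathcal{O}_{Y,y}\to\mathcal{O}_{X,x}$ is a restriction of $f^{*}\colon K(Y)\hookrightarrow K(X)$ it is injective, so $\mathcal{O}_{X,x}$ is a torsion-free module over the valuation ring $\mathcal{O}_{Y,y}$; as torsion-free modules over valuation rings are flat, $\mathcal{O}_{X,x}$ is flat over $\mathcal{O}_{Y,y}$, and hence $f$ is flat. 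This part I expect to be routine once the valuation-ring description of the local rings of $Y$ is in hand.
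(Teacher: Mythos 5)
Your proof is correct in substance, and the dichotomy (constant versus surjective, via cancellation against the structure morphisms) and part (2) (injectivity of $\mathcal{O}_{Y,y}\to\mathcal{O}_{X,x}$ plus \quotes{torsion-free over a valuation ring implies flat}) coincide with the paper's arguments. Part (1), however, takes a genuinely different route. The paper does not use the decomposition of \Cref{Prop: properties of morphism of finite expansion} at all: it fixes an affine open $\Spec(A)\subset Y$, forms the normalization $Z=\Spec(B)$ of $\Spec(A)$ in $K(X)$, produces $g:Z\to X$ from \Cref{Th: morphism of fields comes from morphism of schemes}, and then builds an explicit inverse $h:f^{-1}(\Spec(A))\to Z$ by showing $B\subset B_i$ for every affine piece $\Spec(B_i)$ of the fibre --- a step that uses the normality of $X$ in an essential way --- concluding via equalizer arguments that $f^{-1}(\Spec(A))\cong\Spec(B)$ is affine and integral over $\Spec(A)$. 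You instead factor $f=h\circ g$ with $g$ integral and $h$ proper via \Cref{Cor: sep univ closed morph => finite exp} and \Cref{Prop: properties of morphism of finite expansion}(1), cut $h$ down to the scheme-theoretic image, and invoke \quotes{proper and quasi-finite implies finite} (Zariski's Main Theorem over the qcqs base $Y$). This buys you a softer argument that, notably, never uses the normality of $X$ --- consistent with the classical fact that any non-constant morphism of proper integral curves is finite --- whereas the paper's argument buys the stronger, explicit description of $f^{-1}(\Spec(A))$ as the normalization of $\Spec(A)$ in $K(X)$. Two details of yours deserve tightening: the generic fibre of $h$ is not a \emph{closed} subscheme, so you should argue separately that no closed point of the intermediate scheme can map to $\eta_Y$ under the closed map $h$ (otherwise its image $\{\eta_Y\}$ would be closed); and the verification that the scheme-theoretic image is integral of dimension $1$ should cite \Cref{Thm: trdeg = dim}(2) to see that the affine pieces $g^{-1}(V)$ of $X$ still have dimension $1$, since Noetherian dimension theory is unavailable here.
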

	\begin{proof}
		Since $X$ and $Y$ are universally closed curves we can conclude by cancellation that $f$ is separated, quasi-compact and universally closed. In particular it follows that $f(X)$ is closed in $Y$. Images of irreducible sets under continuous maps are again irreducible, hence $f(X)$ is also irreducible. As $Y$ is integral and quasi-compact an irreducible closed subscheme is either a closed point or the whole scheme $Y$. Thus $f$ is either constant or surjective.
		
		Now let us prove the additional assertion (1). By \autocite[\href{https://stacks.math.columbia.edu/tag/01WM}{Tag 01WM}]{stacks-project} it is equivalent to show that $f$ is affine and universally closed. Recall that by \Cref{Lem: dominant morphism defines morphism of function fields} $f^\ast: K(Y) \hookrightarrow K(X)$ induces an algebraic field extension. Now let $\Spec(A) \subset Y$ be a non-empty affine open subscheme. Then $A$ is of finite expansion over $k$ and $\Frac(A) \cong K(Y) \subset K(X)$. Denote with $Z$ the normalization of $\Spec(A)$ in $K(X)$. Then by \autocite[Prop.~12.43]{AlgGeom-Goertz} $Z \cong \Spec(B)$, $K(Z) \cong K(X)$ and $B$ is also of finite expansion over $k$. So by \Cref{Th: morphism of fields comes from morphism of schemes} there exists a unique morphism $g: Z \to X$. By construction of $Z$ the image of $f \circ g$ lies in $\Spec(A)$, hence we have an induced morphism $g: Z \to f^{-1}(\Spec(A))$. To conclude the proof we must show that this is an isomorphism, so let us construct an inverse. 
		We choose an affine open covering $f^{-1}(\Spec(A)) = \bigcup_i \Spec(B_i)$. Then $\Frac(B_i) \cong K(X) \cong K(Z)$ for all $i$. Let $b \in B$, then $b$ is integral over $A$ and thus also over $B_i$. But $\Spec(B_i)$ is an affine open of $X$, so it is normal. Thus $b \in B_i $ and $B \subset B_i$ for all $i$. We obtain morphisms $h_i: \Spec(B_i) \to \Spec(B) = Z$ which are induced by inclusions of the coordinate rings into $K(X)$, thus we can glue them to a morphism $h: f^{-1}(\Spec(A)) \to Z$. Now we consider the equalizer $\Eq(h \circ g, \id_Z)$ which is a closed subscheme of $Z$. Since on function fields $h^\ast$ is inverse to $g^\ast$ we must have that $h \circ g (\eta_Z) = \id_Z(\eta_Z)$, but this implies that $\eta_Z \in \Eq(h \circ g, \id_Z)$ and thus $\Eq(h \circ g, \id_Z) = Z$. Therefore $h \circ g = id_Z$. As an open subscheme of a separated scheme $f^{-1}(\Spec(A))$ is also separated over $\Spec(k)$, therefore we can conclude by a symmetric argument that $g \circ h = \id_{f^{-1}(\Spec(A))}$.
		
		For assertion (2), pick points $x \in X$ and $y \in Y$ such that $f(x) = y$. The local ring $\mathcal{O}_{Y,y}$ is either a field or a valuation ring by \Cref{Cor: integral normal scheme of finite expansion dim 1 => local rings are valuation rings}. Further, $f$ is a dominant morphism between integral schemes, hence the induced homomorphism on local rings $\mathcal{O}_{Y,y} \to \mathcal{O}_{X,x}$ is injective by \autocite[\href{https://stacks.math.columbia.edu/tag/0CC1}{Tag 0CC1}]{stacks-project}. Therefore $\mathcal{O}_{X,x}$ is torsion free as a $\mathcal{O}_{Y,y}$-module and by \autocite[\href{https://stacks.math.columbia.edu/tag/0539}{Tag 0539}]{stacks-project} this proves that $\mathcal{O}_{X,x}$ is a flat $\mathcal{O}_{Y,y}$-module.
	\end{proof}
	
	\subsection{Construction of universally closed curves starting from field extensions}\label{Sec: Scheme construction starting from a field extension of transcendence degree 1}
	Starting with a field extension $K$ of $k$ of transcendence degree $1$ we will now construct a universally closed curve $X$ with $K(X) \cong K$. This construction can be seen as generalization of the first part of \autocite[Prop.~7.4.18]{EGAII}.
	\begin{Th}\label{Th: construction of scheme from function field}
		Let $k$ be a field, and let $K$ be a field extension of $k$ of transcendence degree $1$. Then there is a normal integral universally closed curve $X$ over $k$ with $K(X) \cong K$. It is unique up to isomorphism.
	\end{Th}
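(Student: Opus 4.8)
The plan is to imitate the classical construction of the normal proper model of a function field of transcendence degree $1$: glue together the normalizations of the two standard affine charts of $\mathbb{P}^1_k$, the only twist being that one now normalizes in the possibly infinite algebraic extension $K/k(x)$. The one non-formal input is \Cref{Ex: examples of algebras of finite expansion}, which tells us these normalizations remain $k$-algebras of finite expansion; this is what lets the uniqueness argument invoke \Cref{Th: morphism of fields comes from morphism of schemes}. The main obstacle I anticipate is the verification that the glued scheme is universally closed, together with the bookkeeping needed to make the glueing datum coherent so that $K(X)\cong K$ on the nose.

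First I fix $x\in K$ transcendental over $k$, so that $K$ is algebraic over $k(x)=k(x^{-1})$. Let $A$ be the integral closure of $k[x]$ in $K$ and $B$ the integral closure of $k[x^{-1}]$ in $K$. Both are normal domains with fraction field $K$, both are $k$-algebras of finite expansion by \Cref{Ex: examples of algebras of finite expansion}, and both have dimension $1$ (e.g.\ by \Cref{Thm: trdeg = dim}(1), or by going up along the integral extensions $k[x]\subset A$ and $k[x^{-1}]\subset B$). Since integral closure commutes with localization, $A_x$ and $B_{x^{-1}}$ both coincide with the integral closure of $k[x,x^{-1}]$ in $K$, hence with each other as subrings of $K$; I glue $\Spec A$ and $\Spec B$ along $D(x)=\Spec A_x$ and $D(x^{-1})=\Spec B_{x^{-1}}$ via this identification and call the result $X$. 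As the two charts are normal, integral, one-dimensional and overlap in a non-empty open, $X$ is a normal integral scheme of dimension $1$, quasi-compact, with generic point $\eta$ satisfying $K(X)=\kappa(\eta)\cong K$ and hence $\trdeg_k\kappa(\eta)=1$.

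It remains to show that $X\to\Spec k$ is separated and universally closed. The inclusions $k[x]\hookrightarrow A$ and $k[x^{-1}]\hookrightarrow B$ induce morphisms of $\Spec A$ and $\Spec B$ to the two standard charts of $\mathbb{P}^1_k$, which agree on the overlap (both restricting to $k[x,x^{-1}]\hookrightarrow A_x$), so they glue to a morphism $\pi\colon X\to\mathbb{P}^1_k$. By construction the preimage under $\pi$ of each standard chart of $\mathbb{P}^1_k$ is the corresponding affine chart of $X$ (the preimage of the intersection of the two charts being exactly $\Spec A_x$), so $\pi$ is affine and is integral over each chart of the base; hence $\pi$ is integral, in particular affine --- so separated --- and universally closed. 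Composing with the proper morphism $\mathbb{P}^1_k\to\Spec k$ shows that $X\to\Spec k$ is separated and universally closed. Therefore $X$ is a normal integral universally closed curve over $k$ with $K(X)\cong K$.

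For uniqueness, let $X'$ be another such curve and fix a $k$-isomorphism $\alpha\colon K(X')\to K(X)$. By \Cref{Cor: sep univ closed morph => finite exp} both $X$ and $X'$ are of finite expansion over $k$, so \Cref{Th: morphism of fields comes from morphism of schemes} yields a unique morphism $f\colon X\to X'$ with $f^*=\alpha$ and a unique morphism $g\colon X'\to X$ with $g^*=\alpha^{-1}$. Since $(g\circ f)^*=\id_{K(X)}$ and $(f\circ g)^*=\id_{K(X')}$, the uniqueness clause of \Cref{Th: morphism of fields comes from morphism of schemes} forces $g\circ f=\id_X$ and $f\circ g=\id_{X'}$; equivalently, $f$ has degree $1$ and is an isomorphism by \Cref{Cor: morphism of degree 1 is an isomorphism}. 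Hence $X$ is unique up to isomorphism.
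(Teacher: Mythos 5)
Your proof is correct, but it takes a genuinely different route from the paper. The paper constructs $X$ abstractly: it takes the single chart $U=\Spec(A)$ (the normalization of $\Spec(k[x])$ in $K$), invokes Hamacher's compactification theorem (\Cref{Th: compactification of morphism of finite expansion}) to embed $U$ into a separated universally closed $k$-scheme, and then spends most of the proof repairing the compactification --- replacing it by the scheme-theoretic closure of $U$, then by its reduction, then by its normalization --- to recover integrality and normality. You instead build $X$ explicitly as the normalization of $\mathbb{P}^1_k$ in $K$, gluing the integral closures of $k[x]$ and $k[x^{-1}]$ along their common localization; separatedness and universal closedness then fall out of the factorization $X\to\mathbb{P}^1_k\to\Spec(k)$ as an integral morphism followed by a proper one. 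Your construction is more elementary and self-contained (it avoids \Cref{Th: compactification of morphism of finite expansion} entirely, and could equally be packaged as a single application of \autocite[Prop.~12.43]{AlgGeom-Goertz} to $\mathbb{P}^1_k$), and it yields the extra information that $X$ is integral over $\mathbb{P}^1_k$; the paper's approach is less explicit but showcases the compactification machinery that motivates the whole finite-expansion framework and would survive in settings where no obvious ambient proper scheme is available. Your uniqueness argument via the uniqueness clause of \Cref{Th: morphism of fields comes from morphism of schemes} is a minor variant of the paper's appeal to \Cref{Cor: morphism of degree 1 is an isomorphism} and is equally valid.
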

	\begin{proof}
		Let $x \in K$ be such that $x$ is transcendent over $k$. We denote with $\nu: U \to \Spec(k[x])$ the normalization of $\Spec(k[x])$ in $K$. By \autocite[Prop.~12.43]{AlgGeom-Goertz} the scheme $U$ has the following properties:
		\begin{enumerate}[label=(\arabic*)]
			\item The scheme $U$ is integral and normal, and $K(U) = K$.
			\item The morphism $\nu$ is integral and surjective and $\dim(U) = \dim(\Spec(k[x])) \allowbreak= 1$.
			\item $\Spec(k[x])$ is affine, thus $U = \Spec(A)$, where $A$ is the integral closure of $k[x]$ in $K$.
		\end{enumerate}
		Naturally we can regard $U$ also as $k$-scheme, we denote the $k$-structure morphism by $\pi: U \to \Spec(k).$
		From the above properties we can immediately conclude that $U$ is a one-dimensional, normal, quasi-compact, separated, integral $k$-scheme of finite expansion.
		
		Now we apply \Cref{Th: compactification of morphism of finite expansion} to write $\pi$ as composition $\pi = \widetilde{{\pi}} \circ j$, where $j: U \to X$ is an open immersion and $\widetilde{\pi}: X \to \Spec(k)$ is separated and universally closed. We consider $\widetilde{\pi}$ and $X$ as the compactification of $\pi$ and $U$ respectively. A priori it may not be clear that this is a meaningful notion of compactification for our purpose, therefore we will now establish that $j$ is dominant and that we can consider $X$ to be integral, normal and quasi-compact. 
		\begin{description}
			\item[\boldmath$j$ is dominant:] We restate the arguments presented in \autocite[Rem,~4.2]{conrad2007deligne}: $\pi$ is quasi-compact and $\widetilde{\pi}$ is separated, so by cancellation $j$ is also quasi-compact. Thus, the scheme-theoretic closure of $U$ in $X$ exists and we rename it as $X$. Obviously a closed subscheme of a separated and universally closed scheme is also separated and universally closed. So $j$ is dominant.
			
			\item[\boldmath$X$ is integral:] Closures and images under continuous maps of irreducible sets are again irreducible. Thus $X = \overline{j(U)}$ is irreducible.
			
			As $U$ is reduced $j$ factors as (\autocite[Prop.~3.51]{AlgGeom-Goertz})
			\begin{equation*}
				\begin{tikzcd}
					U \arrow[r, "j_{red}"]\arrow[bend right, drru, swap, "j"] & X_{red} \arrow[r, "\iota"] & X,
				\end{tikzcd}
			\end{equation*}
			where $\iota: X_{red} \to X$ is the canonical inclusion of the reduced subscheme. As the topological spaces of $X$ and $X_{red}$ agree $j_{red}$ must also be dense. We have an isomorphism between $U$ and $j(U)$, so $j(U)$ is reduced too. Therefore $\iota$ defines an isomorphism between $j(U)$ and $j_{red}(U)$. Hence $j_{red}$ is also an open immersion. The reduced structure morphism $\widetilde{\pi}_{red}: X_{red} \to \Spec(k)$ is again separated and universally closed by cancellation. Now we rename $X_{red}$ as $X$.
			
			\item[\boldmath$X$ is normal:] Denote with ${\pi': X_{norm} \to X}$ the normalization of $X$ in $K(X)$. By \autocite[Prop.~12.44]{AlgGeom-Goertz} the morphism $\pi'$ is integral and dominant and we obtain a unique morphism $j': U \to X_{norm}$ such that $j = \pi' \circ j'$. Also note that by \autocite[Rem.~12.46]{AlgGeom-Goertz} the restriction ${\pi'^{-1}(j(U)) \to j(U)}$ is an isomorphism. The open immersion $j$ defines an isomorphism between $U$ and $j(U)$ and $j'$ defines an isomorphism between $U$ and $j'(U)$. The composition of isomorphisms is an isomorphism, so we have that 
			\begin{equation*}
				U \cong j(U) \cong j'(U) \cong \pi'^{-1}\big(j(U)\big).
			\end{equation*}
			I.e., $j'(U)$ is isomorphic to an open subscheme of $X_{norm}$, therefore $j'$ is an open immersion. Further, it is also clear that $j'$ is dominant, else the equality 
			\begin{equation*}
				\overline{j(U)} = X = \overline{\pi'\big(j'(U)\big)}
			\end{equation*} 
			would not hold. Integral morphisms are separated and universally closed and both properties are stable under composition, thus $\widetilde{{\pi}}_{norm}: X_{norm} \to \Spec(k)$ is also separated and universally closed. Now we can rename $X_{norm}$ as $X$ and $j'$ as $j$.
			
			\item[\boldmath$X$ is quasi-compact:] A universally closed morphism is also quasi-compact by \autocite[\href{https://stacks.math.columbia.edu/tag/04XU}{Tag 04XU}]{stacks-project}. So $\widetilde{\pi}$ is quasi-compact.
		\end{description}
		
		Finally, we have the following equalities for dimensions and function fields 
		\begin{equation*}
			\begin{split}
				\dim(X_{norm}) = \dim(X) = \dim(U) = 1, \\
				K(X_{norm}) \cong K(X) \cong K(U) \cong K.
			\end{split}
		\end{equation*}		
		To sum it up, we have constructed a $k$-scheme $X$ which is quasi-compact, universally closed, separated, integral, normal, of
		dimension $1$ and with $K(X) \cong K$.
		
		Let $X'$ be another such scheme, then we have an isomorphism $K(X) \cong K \cong K(X')$. By \Cref{Th: morphism of fields comes from morphism of schemes} and \Cref{Cor: morphism of degree 1 is an isomorphism} it induces an isomorphism $X' \to X$.
	\end{proof}

	\subsection{Proof of the main theorem}
	We are now able to prove the central result of this thesis.
	\begin{Th}\label{Th: Main Theorem}
		Let $k$ be a field. There is a contravariant equivalence between the categories of 
		\begin{enumerate}[label=(\roman*)]
			\item normal integral universally closed curves over $k$ (with non-constant morphisms),
			
			\item extension fields $K$ of $k$ of transcendence degree $1$ (with \newline$k$-homomorphisms),
		\end{enumerate}
		given by mapping a scheme $X$ as in (i) to its function field $K(X)$.
	\end{Th}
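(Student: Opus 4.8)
The plan is to exhibit the assignment $X \mapsto K(X)$ as a contravariant functor $F$ from the category in (i) to the category in (ii) and then to verify that $F$ is essentially surjective and fully faithful; since an essentially surjective and fully faithful functor is an equivalence, this will prove the theorem.

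First I would check that $F$ is well defined. On objects: a normal integral universally closed curve $X$ is of finite expansion over $\Spec(k)$ by \Cref{Cor: sep univ closed morph => finite exp}, and it is integral of dimension $1$, so \Cref{Thm: trdeg = dim}~(1) gives $\trdeg_k\big(K(X)\big) = \dim(X) = 1$; hence $K(X)$ is an object of (ii). On morphisms: if $f\colon X \to Y$ is a non-constant morphism in (i), then by \Cref{Th: morphism constant or surjective and integral} it is surjective, hence dominant, so \Cref{Lem: dominant morphism defines morphism of function fields}~(1) gives a $k$-homomorphism $f^*\colon K(Y) \to K(X)$, functorially in $f$, and I take $F(f) := f^*$. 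I would also remark that (i) really is a category with non-constant morphisms: $\id_X$ is surjective and $X$ is not a point (as $\dim(X)=1$), and a composite of surjective morphisms is surjective; so identities and composites of non-constant morphisms are again non-constant. Thus $F$ is a contravariant functor.

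Essential surjectivity is then immediate from \Cref{Th: construction of scheme from function field}: every extension field $K$ of $k$ of transcendence degree $1$ is isomorphic to $K(X)$ for a normal integral universally closed curve $X$. For full faithfulness, fix normal integral universally closed curves $X,Y$; I must show that $f \mapsto f^*$ is a bijection from the set of non-constant morphisms $X \to Y$ onto $\Hom_{k}(K(Y),K(X))$. Since a normal integral universally closed curve is normal, separated, integral, of finite expansion (\Cref{Cor: sep univ closed morph => finite exp}) and one-dimensional, both $X$ and $Y$ meet the hypotheses of \Cref{Th: morphism of fields comes from morphism of schemes}. That theorem says every $k$-homomorphism $\alpha\colon K(Y) \to K(X)$ is $f^*$ for a unique $f\colon X \to Y$, which gives injectivity of $f \mapsto f^*$; for surjectivity I only need the $f$ attached to $\alpha$ to be non-constant, and indeed $\alpha = f^*$ is injective (a field homomorphism), so $f$ carries the generic point of $X$ to that of $Y$, hence is dominant, hence $\overline{f(X)}=Y$, and by \Cref{Th: morphism constant or surjective and integral} $f$ must then be surjective (a closed point is not dense in the one-dimensional $Y$), in particular non-constant.

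Assembling the pieces, $F$ is an essentially surjective, fully faithful contravariant functor, hence a contravariant equivalence of categories, as claimed. I do not expect a serious obstacle: the substantive content already sits in \Cref{Th: morphism of fields comes from morphism of schemes}, \Cref{Th: construction of scheme from function field} and \Cref{Th: morphism constant or surjective and integral}. The one point demanding a little care is the full faithfulness step, namely reconciling the \emph{non-constant} constraint on morphisms in (i) with the fact that \emph{every} $k$-homomorphism of fields is allowed in (ii); this matches up precisely because field homomorphisms are automatically injective, hence come from dominant morphisms, which by closedness of the image and $\dim = 1$ are automatically surjective.
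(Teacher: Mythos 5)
Your proposal is correct and follows essentially the same route as the paper's own proof: the functor is assembled from \Cref{Lem: dominant morphism defines morphism of function fields} and \Cref{Th: morphism constant or surjective and integral}, full faithfulness from \Cref{Th: morphism of fields comes from morphism of schemes}, and essential surjectivity from \Cref{Th: construction of scheme from function field}. Your extra care in checking that the morphism attached to a field homomorphism is non-constant (via dominance and dimension $1$) is a point the paper's proof passes over silently, so it is a welcome addition rather than a deviation.
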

	\begin{proof}
		The association $X \mapsto K(X)$ indeed defines a contravariant functor. For objects this is obvious, for morphisms we use \Cref{Lem: dominant morphism defines morphism of function fields} and \Cref{Th: morphism constant or surjective and integral}. Conversely, given a $k$-homomorphism $K(Y) \to K(X)$ of function fields we can construct with \Cref{Th: morphism of fields comes from morphism of schemes} a unique morphism $X \to Y$ of normal integral universally closed curves. By \Cref{Cor: morphism of degree 1 is an isomorphism} this is an isomorphism if the fields are isomorphic, so the functor is fully faithful. Essentially surjective follows from \Cref{Th: construction of scheme from function field}. So by \autocite[Prop.~7.26]{Category-Awodey} the functor gives rise to a contravariant categorical equivalence.
	\end{proof}

	\setcounter{biburllcpenalty}{9000}
	\setcounter{biburlucpenalty}{9000}
	\printbibliography	
\end{document}